\documentclass[12pt]{amsart}
 \usepackage{amssymb}
 \usepackage{amsmath}
 \usepackage{lipsum}
\usepackage{tikz-cd}
\usepackage{amsthm}
 \usepackage[utf8]{inputenc}
\usepackage[english]{babel}
 \usepackage{amsmath}
 \usepackage{MnSymbol}
 \usepackage{microtype}
 \usepackage{adjustbox}

\newtheorem{thm}{Theorem}
\newtheorem{cor}[thm]{Corollary}
\newtheorem*{thm-non}{Theorem}
\newtheorem{lem}[thm]{Lemma}
\theoremstyle{definition}
\newtheorem{definition}[thm]{Definition}

\theoremstyle{remark}
\newtheorem{remark}[thm]{Remark}

\theoremstyle{remark}
\newtheorem{Example}[thm]{Example}

\theoremstyle{Proposition}
\newtheorem{prop}[thm]{Proposition}
\newtheorem*{prop-non}{Proposition}

\usepackage{thmtools}
\usepackage{thm-restate}

\usepackage{hyperref}

\usepackage{cleveref}

\DeclareMathOperator{\Spec}{Spec}

\DeclareMathOperator{\Exc}{Exc}

\DeclareMathOperator{\W_2(k)}{W_2(k)}

\DeclareMathOperator{\Supp}{Supp}

\DeclareMathOperator{\Diff}{Diff}
\DeclareMathOperator{\coeff}{coeff}

\DeclareMathOperator{\depth}{depth}
\DeclareMathOperator{\codim}{codim}

\DeclareMathOperator{\sO}{\mathcal{O}}

\numberwithin{equation}{section}



\begin{document}

\newcommand{\nospaceperiod}{\makebox[0pt][l]{\,.}}
\newcommand{\nospacepunct}[1]{\makebox[0pt][l]{\,#1}}

\newcommand*{\isoarrow}[1]{\arrow[#1,"\rotatebox{90}{\(\sim\)}"
]}

\newcommand\blankpage{%
    \null
    \thispagestyle{empty}%
    \addtocounter{page}{-1}%
    \newpage}

\newcommand{\dotr}[1]{#1^{\bullet}} 
\newcommand{\defeq}{\mathrel{\mathop:}=}
\newcommand{\eqdef}{\mathrel{\mathop=}:}
\newcommand{\C}{\mathbb{C}}
\newcommand{\Q}{\mathbb{Q}}
\newcommand{\Z}{\mathbb{Z}}
\newcommand{\R}{\mathbb{R}}
\newcommand{\mO}{\mathcal{O}}
\newcommand{\mL}{\mathcal{L}}

\newcommand{\EA}[1]{\textcolor{blue}{(EA: #1)}}

\bibliographystyle{alpha}

\setcounter{tocdepth}{1}

\title{On a vanishing theorem for birational morphisms of threefolds in positive and mixed characteristics}
\title[A birational vanishing theorem in positive characteristics]{On a vanishing theorem for birational morphisms of threefolds in positive and mixed characteristics}
\author{Emelie Arvidsson}
\address{University of Utah, Department of Mathematics, JWB 311, 84112 Salt Lake City, Utah, USA}
\email{arvidsson@math.utah.edu}

\thanks{The author was supported by SNF \#P500PT\_203083}

\subjclass[2020]{14E30,14G17 ,14F17}


\keywords{Positive Characteristic, Mixed Characteristic, Vanishing Theorems}

\begin{abstract} In this short note, we prove a relative Kawamata--Viehweg vanishing type theorem for birational morphisms. We use this to prove a Grauert–Riemenschneider theorem over 
log canonical threefolds without zero-dimensional lc-centers, in residue characteristic $p>5$. 
In large enough residue characteristics, we prove a Grauert–Riemenschneider theorem over threefold log canonical singularities with standard coefficients. These vanishing theorems can also be used to study the depth of log canonical singularities at non log canonical centers, as well as the singularities of the log canonical centers themselves. 
The former simplifies joint work with F. Bernasconi and Zs. Patakfalvi, and the latter appears in joint work with Q. Posva.

\end{abstract}

\maketitle

\section{Introduction} 
Over the complex numbers, Kawamata log terminal singularities are Cohen--Macaulay and rational \cite{Elk81}. It is therefore natural to ask if Kawamata log terminal singularities also have such good properties over algebraically closed fields of positive characteristics? S. Kov\'acs showed that these two questions are essentially equivalent to the single question; if klt singularities are Cohen--Macaulay or not over fields of positive characteristics? In particular, he showed that Cohen--Macaulay klt singularities are pseudo--rational, and when resolution exists, rational in the usual sense \cite{kovacs2017rational}. In dimension three, it was recently showed that klt singularities are indeed Cohen--Macaulay in characteristic $p>5$ \cite{ABL, HW}, and counter--examples where constructed in low characteristics \cite{CT, Berfail, ABL}.\\

With the positive results for threefold klt singularities at hand, it is natural to ask if Koll\'ar's theorem on the depth of log canonical singularities at non log canonical centers \cite[Theorem 3]{localkvv} also hold true for threefolds in (possibly large enough) positive characteristics? Let us first recall what Koll\'ar's theorem says over the Complex numbers.

\begin{thm}[Koll\'ar]\label{lkvv} Let $(X,\Delta)$ be a three dimensional log canonical pair over $\mathbb{C}$ and $x\in X$ a point which is not a log-canonical center then 
$$\depth_x(\mathcal{O}_X)=\codim_X(x)$$

\end{thm}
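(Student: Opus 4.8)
The plan is to make a short string of standard reductions and then attack the single substantive case with vanishing on a resolution. Since a log canonical pair has $X$ normal, $\mathcal{O}_X$ is $S_2$, so $\operatorname{depth}_x\mathcal{O}_X=\operatorname{codim}_X(x)$ automatically whenever $\operatorname{codim}_X(x)\le 2$. Hence I may assume $x$ is a closed point with $\dim\mathcal{O}_{X,x}=3$ and must prove that $\mathcal{O}_{X,x}$ is Cohen--Macaulay; by local duality on a normal three-dimensional local ring this is equivalent to $H^2_x(\mathcal{O}_X)=0$, i.e. to the vanishing $\mathcal{H}^{-2}(\omega_X^{\bullet})_x=0$, where $\omega_X^{\bullet}$ is the dualizing complex. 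If $(X,\Delta)$ is klt in a neighbourhood of $x$, then $X$ has rational, hence Cohen--Macaulay, singularities there by Kawamata--Viehweg vanishing in characteristic zero (as recalled at the start of the introduction), and we are done. So I may assume $x\in\operatorname{Nklt}(X,\Delta)=:W$; since $(X,\Delta)$ is log canonical, $W$ is a union of log canonical centres, and since $x$ is itself not a log canonical centre (a zero-dimensional one through $x$ would be $\{x\}$), every log canonical centre through $x$ has positive dimension; in particular $\dim_xW\ge 1$.

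The substantive claim I would then prove is that the non-Cohen--Macaulay locus of $X$ — which for a normal threefold is exactly the (finite) support of $\mathcal{H}^{-2}(\omega_X^{\bullet})$ — meets $W$ only in zero-dimensional log canonical centres; this finishes the theorem because $x$ is not such a centre. For this I would pass to a log resolution $\pi\colon Y\to X$ of $(X,\Delta)$, write $K_Y+B_Y=\pi^*(K_X+\Delta)$, let $N\subseteq Y$ be the reduced sum of the log canonical places, and play off the adjunction sequence $0\to\omega_Y\to\omega_Y(N)\to\omega_N\to 0$ on the snc variety $Y$ (with $\omega_N=\mathcal{O}_N(K_Y+N)$) against relative Kawamata--Viehweg vanishing and Grauert--Riemenschneider vanishing, both available in characteristic zero. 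After twisting the terms by $\pi^{-1}$ of a suitable Cartier divisor and by the round-up of the $\pi$-numerically trivial fractional part of $B_Y$, these vanishing theorems let one push the sequence down to a short exact sequence of coherent sheaves on $X$ whose outer terms govern $\omega_X^{\bullet}$ and whose remaining term is assembled from the dualizing sheaves of the log canonical centres; each such centre carries its own log canonical pair structure by Kawamata's subadjunction. Dualizing this sequence and running it through local duality expresses the local contribution to $\mathcal{H}^{-2}(\omega_X^{\bullet})$ at $x$ in terms of analogous, strictly lower-dimensional invariants of the log canonical centres through $x$; since those centres have dimension $1$ or $2$ and are again log canonical, the argument closes by induction on the dimension of the log canonical centre, the base cases being the Cohen--Macaulayness of klt surfaces and the triviality of depth in dimension one.

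The step I expect to be the real obstacle is making this ``push down, dualize, induct on log canonical centres'' bookkeeping rigorous, because the log canonical centres of a pair that is merely log canonical (rather than dlt) can be non-normal and can intersect badly, so one cannot directly induct on them. The standard remedy, and the technical heart of the proof, is to first replace $(X,\Delta)$ by a dlt modification $g\colon(\widetilde X,\widetilde\Delta)\to(X,\Delta)$ — on which the log canonical centres do form a stratification by normal subvarieties closed under intersection — carry out all of the above on $\widetilde X$, and then descend the depth statement along $g$, using that $g$ is crepant and that the relevant higher direct images vanish (again by characteristic-zero vanishing). Equivalently, one can short-circuit the whole inductive package by invoking the theorem of Koll\'ar--Kov\'acs that $(X,\operatorname{Nklt}(X,\Delta))$ is a Du Bois pair, which already packages precisely the cohomological vanishing needed. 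In either formulation the proof rests on a Kawamata--Viehweg-type vanishing statement on a birational model of $X$ — which is exactly the ingredient that has to be replaced, and is the object of study of, the present paper in positive and mixed characteristic.
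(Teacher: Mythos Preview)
The paper does not prove this statement. Theorem~\ref{lkvv} is stated as a known result of Koll\'ar over $\mathbb{C}$, attributed to \cite[Theorem~3]{localkvv}, and is included only as motivation: the paper's goal is to investigate to what extent analogous statements hold in positive and mixed characteristic. There is therefore no ``paper's own proof'' against which to compare your proposal.

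That said, your sketch is a reasonable outline of the circle of ideas behind the characteristic-zero result. The reduction to $H^2_x(\mathcal{O}_X)=0$ at a closed point, the dichotomy klt vs.\ $x\in\nklt(X,\Delta)$, and the passage to a dlt modification where the lc-centres are well-behaved are all correct moves; the shortcut via Du Bois pairs (Koll\'ar--Kov\'acs) is also a legitimate route. What you label ``the real obstacle'' is indeed the heart of the matter, and Koll\'ar's original proof in \cite{localkvv} organises this via a local Kawamata--Viehweg vanishing theorem rather than the more elaborate ``push down, dualize, induct'' scheme you describe. Your final paragraph correctly identifies that the characteristic-zero vanishing input is precisely what the present paper aims to replace, so you have understood the role of Theorem~\ref{lkvv} in the paper even if you have not been asked to reprove it here.
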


 It turns out that Theorem \autoref{lkvv} fails to be true in every positive characteristic (see Example \autoref{Ex}). In this note we use the previous results on dlt threefold singularities in characteristic $p>5$ to study the depth of log canonical singularities at non lc-centers via a dlt modification $f\colon Y\to X$. Our main result is that $f$ can be chosen in such a way that it satisfies a birational Kawamata--Viehweg type vanishing theorem (see Prop. \autoref{dltmodif}). As a direct consequence of this birational vanishing theorem, we obtain the following, which can be considered as a positive characteristics analog of Theorem \autoref{lkvv}.
 
\begin{thm}[Corollary \autoref{wild}]\label{introwild} Assume that $X$ is an excellent scheme whose closed points have perfect residue field of characteristic $p>5$. Let $x\in (X,\Delta)$ be a log--canonical threefold singularity which is not a log--canonical center. Suppose $x\in C$, where $C$ is a curve and a minimal lc--center of  $ (X,\Delta)$. Then there exists a crepant dlt modification $f\colon (Y,\Delta_Y)\to (X,\Delta_X)$, such that every component of $E:=\Exc(f)$ has coefficient one in the crepant boundary $\Delta_Y$, and maps surjectively onto $C$. Moreover, $H^2_x(X,\mathcal{O}_X)\cong H^0_x(C,R^1{f_{|E}}_*\mathcal{O}_E)$, where $f_{|E}$ denotes the restriction of $f$ to the surface $E$.
\end{thm}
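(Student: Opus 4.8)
The plan is to reduce to a statement local at $x$, produce a crepant dlt modification to which the birational vanishing of Proposition \autoref{dltmodif} applies, and then run a local cohomology computation along the fibre $f^{-1}(x)$.

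First, since $H^{2}_x(X,\mathcal{O}_X)$ and the sheaf $R^{1}{f_{|E}}_*\mathcal{O}_E$ together with its sections supported at $x$ are all compatible with localization and flat base change, I would replace $X$ by $\Spec\mathcal{O}_{X,x}$. One then takes for $f\colon(Y,\Delta_Y)\to(X,\Delta)$ the crepant dlt modification furnished by Proposition \autoref{dltmodif}; its existence rests on the minimal model program for threefolds in residue characteristic $p>5$ over excellent bases. By crepancy $K_Y+\Delta_Y=f^{*}(K_X+\Delta)$, every $f$-exceptional prime divisor has discrepancy $-1$ — hence coefficient one in $\Delta_Y$ — with image an lc--center of $(X,\Delta)$ of dimension at most $1$. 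As $x$ is not an lc--center, nothing is contracted to $x$; as $C$ is a minimal lc--center through $x$, every exceptional divisor meeting $f^{-1}(x)$ dominates $C$ (two-dimensional lc--centers are components of $\lfloor\Delta\rfloor$, so not exceptional). Thus each component of $E:=\Exc(f)$ is a prime divisor mapping onto the curve $C$ with one-dimensional fibres; in particular $\dim f^{-1}(x)=1$, and therefore $R^{j}f_{*}\mathcal{G}=0$ for every coherent $\mathcal{G}$ and every $j\ge 2$.

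Next I would identify the sheaf on the right. From the exact sequence $0\to\mathcal{O}_Y(-E)\to\mathcal{O}_Y\to\mathcal{O}_E\to 0$ (admissible as $(Y,\Delta_Y)$ is dlt, so $\mathcal{O}_Y(-E)$ is the ideal sheaf of the reduced divisor $E$) I would apply the relative Kawamata--Viehweg vanishing of Proposition \autoref{dltmodif}: using $K_Y+\Delta_Y=f^{*}(K_X+\Delta)$, the divisor $-E$ (or $-\lfloor\Delta_Y\rfloor$, should $\lfloor\Delta\rfloor$ pass through $x$) equals $K_Y+\{\Delta_Y\}$ plus the $f$-numerically trivial $\mathbb{Q}$-divisor $-f^{*}(K_X+\Delta)$, with $(Y,\{\Delta_Y\})$ klt, so the vanishing gives $R^{i}f_{*}\mathcal{O}_Y(-E)=0$ for $i>0$. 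Combined with $R^{\ge 2}f_{*}=0$ and the long exact sequence of $f_*$, this yields $R^{1}f_{*}\mathcal{O}_Y\cong R^{1}f_{*}\mathcal{O}_E=R^{1}{f_{|E}}_*\mathcal{O}_E$, a coherent sheaf supported on $C$; hence $H^{0}_x(X,R^{1}f_{*}\mathcal{O}_Y)=H^{0}_x(C,R^{1}{f_{|E}}_*\mathcal{O}_E)$, the right-hand side of the asserted isomorphism. (If $\lfloor\Delta\rfloor$ meets $x$ one argues with $-\lfloor\Delta_Y\rfloor$ and checks that the strict transforms of those divisors, mapping birationally onto normal surface lc--centers, contribute nothing.)

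For the isomorphism itself, recall that a $\mathbb{Q}$-factorial dlt threefold in residue characteristic $p>5$ is Cohen--Macaulay, so $H^{i}_{f^{-1}(x)}(Y,\mathcal{O}_Y)=0$ for $i\le 1$ (as $\codim_Y f^{-1}(x)=2$). Applying $R\Gamma_x$ to the triangle $\mathcal{O}_X\to Rf_{*}\mathcal{O}_Y\to(R^{1}f_{*}\mathcal{O}_Y)[-1]\xrightarrow{+1}$ and using $R\Gamma_x\circ Rf_{*}=R\Gamma_{f^{-1}(x)}(Y,-)$ gives
\[
0\longrightarrow H^{0}_x(X,R^{1}f_{*}\mathcal{O}_Y)\longrightarrow H^{2}_x(X,\mathcal{O}_X)\longrightarrow H^{2}_{f^{-1}(x)}(Y,\mathcal{O}_Y)\longrightarrow H^{1}_x(X,R^{1}f_{*}\mathcal{O}_Y)\longrightarrow\cdots ,
\]
so the first arrow is injective, and it remains to show the map $H^{2}_x(X,\mathcal{O}_X)\to H^{2}_{f^{-1}(x)}(Y,\mathcal{O}_Y)$ vanishes. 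Here I would use Grothendieck duality for $f$ together with the Grauert--Riemenschneider vanishing $R^{i}f_{*}\omega_Y=0$ ($i>0$), itself a consequence of Proposition \autoref{dltmodif}: these identify $Rf_{*}\mathcal{O}_Y$ with $R\mathcal{H}om_X(f_{*}\omega_Y,\omega_X^{\bullet})[-3]$, and since $X$ is normal — hence Cohen--Macaulay in codimension two — its dualizing complex $\omega_X^{\bullet}$ has cohomology only in degrees $-3$ and $-2$, with $\mathcal{H}^{-2}(\omega_X^{\bullet})$ of finite length; tracing the triangle through this identification shows the connecting map kills $H^{2}_x(X,\mathcal{O}_X)$. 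Combining, $H^{2}_x(X,\mathcal{O}_X)\cong H^{0}_x(C,R^{1}{f_{|E}}_*\mathcal{O}_E)$.

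The main obstacle is Proposition \autoref{dltmodif} itself: arranging a single dlt modification that simultaneously has all exceptional divisors dominating $C$ and satisfies a relative Kawamata--Viehweg vanishing is precisely where the hypothesis $p>5$, the Cohen--Macaulayness and rationality of threefold dlt singularities, and the absence of zero-dimensional lc--centers through $x$ all enter. A secondary difficulty is the vanishing of the connecting map above: the long exact sequence alone yields only the injection, and surjectivity genuinely needs the duality input — equivalently, a sharper form of the vanishing obtained by twisting with an $f$-ample divisor — rather than a formal diagram chase.
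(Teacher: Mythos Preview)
Your outline follows the same architecture as the paper: take the dlt modification from Proposition~\ref{dltmodif}, use the Leray/local-cohomology spectral sequence for $f$, feed in the Cohen--Macaulayness of $Y$ and the vanishing $R^if_*\mathcal{O}_Y(-E)=0$ to obtain $R^1f_*\mathcal{O}_Y\cong R^1{f_{|E}}_*\mathcal{O}_E$, and then kill the obstruction term in the five-term sequence using GR vanishing. That is exactly what the paper does.

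The one place where you diverge, and where your argument is not actually completed, is the treatment of $H^2_{f^{-1}(x)}(Y,\mathcal{O}_Y)$. You try to show only that the map $H^2_x(X,\mathcal{O}_X)\to H^2_{f^{-1}(x)}(Y,\mathcal{O}_Y)$ vanishes, via Grothendieck duality on $X$ and the structure of $\omega_X^\bullet$, but the sentence ``tracing the triangle through this identification shows the connecting map kills $H^{2}_x(X,\mathcal{O}_X)$'' is a claim, not a proof; it is not clear how the fact that $\mathcal{H}^{-2}(\omega_X^\bullet)$ has finite length forces that particular map to be zero. The paper's route is both stronger and shorter: since $Y$ is Cohen--Macaulay (by \cite{ABL} and \cite[Theorem~3.1]{BK}, which do not require $\mathbb{Q}$-factoriality, contrary to your phrasing), local duality on $Y$ gives $H^2_{f^{-1}(x)}(Y,\mathcal{O}_Y)\cong (R^1f_*\omega_Y)_x$, and this is zero by the GR vanishing in Proposition~\ref{dltmodif}. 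So the term itself vanishes, not merely the map into it. Your detour through $R\mathcal{H}om_X(f_*\omega_Y,\omega_X^\bullet)$ can be made to work---unwinding it via local duality on $X$ and Grothendieck duality brings you back to $(R^1f_*\omega_Y)_x$---but as written it is incomplete, and the direct argument is preferable.

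Two minor points: the case split on whether $\lfloor\Delta\rfloor$ passes through $x$ is unnecessary, since Proposition~\ref{dltmodif} already records $R^if_*\mathcal{O}_Y(-E)=0$ for $i=1,2$ without any such hypothesis; and the dlt modification of Proposition~\ref{dlt with ample} is in general \emph{not} $\mathbb{Q}$-factorial (the paper says so explicitly), so you should not invoke that adjective when citing Cohen--Macaulayness.
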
 

If $x\in X$ is not contained in a one dimensional minimal log canonical center of $(X,\Delta)$ then $x\in (X, \Delta)$ is either klt or plt and $H^2_x(X,\mathcal{O}_X)=0$, \cite{ABL, BK}. The above theorem therefore says that the failure of the theorem of Koll\'ar (Theorem \autoref{lkvv}) in positive characteristic is completely explained by the possible failure of the higher direct images of certain surface fibrations to be torsion-free in positive characteristics, see remark \autoref{wildrem}. A similar result is also obtained in \cite[Theorem 1.5]{ABP}, however for a different modification $f$ and a lc pair $(Y, \Delta_Y)$ that  in general is not dlt. Both the statement and proof of Theorem \autoref{introwild} differ from the approach taken in \cite[Theorem 1.5]{ABP}, and Theorem \autoref{introwild} can be used to simplify several arguments in that paper, see Corollary \autoref{S2}.\\

The birational vanishing theorems established in this note also implies that we can extend the \emph{weak Grauert–Riemenschneider} theorem \cite[Theorem 3]{BK} to log canonical threefold pairs in certain situations. First, we have the following over log canonical pairs without zero-dimensional lc-centers:

\begin{thm}[Corollary \autoref{GR}] Let $(X,\Delta)$ be a log canonical threefold singularity without zero dimensional lc-centers, where $X$ is an excellent scheme whose closed points have perfect residue field of characteristic $p>5$. Then the weak Grauert–Riemenschneider theorem holds over $(X,\Delta)$. I.e., for every log resolution $\mu\colon (V, \Delta_V)\to 
(X,\Delta)$ and every $\mathbb{Z}$-divisor $D'$ on $V$, such that there exists a $\mathbb{Q}$-divisor $\Delta'$ with $g_*\Delta'\leq \Delta$, $\lfloor \Exc(\Delta')\rfloor=0$, and satisfying $D'\sim_{\mathbb{R},\mu} K_{X'}+\Delta'$, we have  $R^i\mu_*(D')=0$, for all $i>0$. 
\end{thm}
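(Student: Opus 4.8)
The plan is to compare the given log resolution $\mu\colon V\to X$ with the distinguished crepant dlt modification of Proposition \autoref{dltmodif} on a common resolution, and then to propagate the desired vanishing down the resulting tower, using that proposition as the only non-formal input. Since $R^i\mu_*\mathcal{O}_V(D')$ is coherent, the statement is local on $X$, so we may assume $X$ is local at a closed point $x$. Applying Proposition \autoref{dltmodif} we fix a crepant dlt modification $f\colon(Y,\Delta_Y)\to(X,\Delta)$ enjoying the relative Kawamata--Viehweg vanishing of that proposition; the hypothesis that $(X,\Delta)$ has no zero-dimensional log canonical center is precisely what allows $f$ to be chosen so that its exceptional divisors all occur with coefficient one in $\Delta_Y$ and dominate positive-dimensional centers, which is what makes the proposition applicable. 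Using resolution of singularities for threefolds in this setting, choose moreover a log resolution $h\colon W\to V$ of $(X,\Delta)$ through which the birational map $V\dashrightarrow Y$ becomes a morphism $g\colon W\to Y$, so that $\mu h=fg$ and $W$ is simultaneously a log resolution of $(Y,\Delta_Y)$.

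Next lift $D'$ by setting $D_W=h^*D'+F$, where $F$ is the usual rounding-up of the log-discrepancy divisor of $(V,\Delta')$ along $h$ (the pair $(V,\Delta')$ being log canonical because $\mu_*\Delta'\le\Delta$ and $\lfloor\Exc(\Delta')\rfloor=0$); then $F$ is effective and $h$-exceptional, $h_*\mathcal{O}_W(D_W)=\mathcal{O}_V(D')$, and $D_W\sim_{\mathbb{R},\mu h}K_W+\Delta'_W$ for a boundary $\Delta'_W\ge0$ whose round-down is supported away from every divisor that is exceptional over $V$, exceptional over $Y$, or lies over an lc center. The proof is then completed by composing three vanishing statements. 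First, relative weak Grauert--Riemenschneider vanishing for the birational morphism $h$ between regular threefolds (a case of \cite{BK}) gives $R^jh_*\mathcal{O}_W(D_W)=0$ for $j>0$, hence $R^i\mu_*\mathcal{O}_V(D')\cong R^i(fg)_*\mathcal{O}_W(D_W)$ by the Leray spectral sequence. Second, relative weak Grauert--Riemenschneider vanishing for the birational morphism $g$ over the dlt threefold $(Y,\Delta_Y)$, which follows from \cite{BK} together with the structure of dlt threefold singularities in characteristic $p>5$ established in \cite{ABL}, gives $R^qg_*\mathcal{O}_W(D_W)=0$ for $q>0$, so $R^i(fg)_*\mathcal{O}_W(D_W)\cong R^if_*\mathcal{O}_Y(g_*D_W)$. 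Third, by construction $g_*D_W$ is a $\mathbb{Z}$-divisor on $Y$ of exactly the shape required by Proposition \autoref{dltmodif}, namely $g_*D_W\sim_{\mathbb{R},f}K_Y+\Theta$ with $\Theta\ge0$, $f_*\Theta\le\Delta$, and $\lfloor\Theta\rfloor$ carrying no $f$-exceptional component; hence $R^if_*\mathcal{O}_Y(g_*D_W)=0$ for $i>0$. Chaining the three isomorphisms yields $R^i\mu_*\mathcal{O}_V(D')=0$ for all $i>0$.

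The step I expect to be the main obstacle is the simultaneous bookkeeping of the several boundary conditions along the two legs $W\to V\to X$ and $W\to Y\to X$: the single lift $D_W$ must be arranged so that its auxiliary boundary has round-down avoiding all of the relevant exceptional loci at once, and so that $g_*D_W$ still falls squarely within the scope of Proposition \autoref{dltmodif}. It is precisely at this point that the absence of zero-dimensional lc centers is indispensable, as it is what pins down the shape of the crepant boundary $\Delta_Y$ of the dlt modification; without this hypothesis the conclusion genuinely fails, as Example \autoref{Ex} shows.
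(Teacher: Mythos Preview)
Your proposal is correct and follows essentially the same strategy as the paper. Both arguments factor through the dlt modification of Proposition \autoref{dltmodif}, use the weak GR-vanishing of \cite{BK} over dlt threefolds for the leg landing on $Y$, and then invoke Proposition \autoref{dltmodif} for the leg $Y\to X$. The only organizational difference is that the paper first invokes \cite[Theorem 2.12]{BK} to reduce to \emph{some} log resolution, and then simply replaces the given resolution by one that already factors through $X^{\mathrm{dlt}}$; this collapses your two top layers $W\to V$ and $W\to Y$ into a single morphism $g\colon Y\to X^{\mathrm{dlt}}$, so the paper never needs your separate step for $h$. The bookkeeping you flag as the main obstacle (that the pushforward $g_*D_W$ lands in the range of Proposition \autoref{dltmodif}) is exactly what the paper handles by citing \cite[Claim 2.5, Theorem 2.12]{BK}.
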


Secondly, by invoking a recent vanishing theorem for log Calabi-Yau surfaces due to Kawakami \cite{KKVV}, we can show a Grauert-Riemenschneider vanishing theorem over more general log-canonical singularities, in large enough characteristic.

\begin{thm}[Corollary \autoref{GRII}] There exists an integer $p_0$ such that if $(X,\Delta)$ is a log canonical threefold with standard coefficients over a perfect field of characteristic $p>p_0$, then the weak GR-vanishing theorem holds over $(X,\Delta)$. I.e., for every log resolution $\mu\colon (V, \Delta_V)\to 
(X,\Delta)$ and every $\mathbb{Z}$-divisor $D'$ on $V$, such that there exists a $\mathbb{Q}$-divisor $\Delta'$ with $g_*\Delta'\leq \Delta$, $\lfloor \Exc(\Delta')\rfloor=0$, and satisfying $D'\sim_{\mathbb{R},\mu} K_{X'}+\Delta'$, we have  $R^i\mu_*(D')=0$, for all $i>0$. 
\end{thm}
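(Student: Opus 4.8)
The plan is to deduce Corollary~\autoref{GRII} from Corollary~\autoref{GR} by isolating the single genuinely new case — that of a zero-dimensional log canonical center — and disposing of it with Kawakami's vanishing theorem for log Calabi--Yau surfaces, with the bound $p_0$ coming from boundedness of the relevant family of surfaces.

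\textbf{Reduction to a dlt modification.} The assertion is local on $X$, so we fix a closed point $x\in X$. After refining $\mu$ we may assume it factors through a $\mathbb{Q}$-factorial (crepant) dlt modification $f\colon (Y,\Delta_Y)\to (X,\Delta)$ with $K_Y+\Delta_Y=f^*(K_X+\Delta)$, say $\mu\colon V\xrightarrow{h}Y\xrightarrow{f}X$. Applying Proposition~\autoref{dltmodif} (equivalently, relative Kawamata--Viehweg vanishing for birational morphisms of threefolds, available since $p>p_0\ge 5$) to $h$, the hypotheses on $D'$ and $\Delta'$ give $R^ih_*\mathcal{O}_V(D')=0$ for $i>0$ together with $h_*\mathcal{O}_V(D')\cong\mathcal{O}_Y(D_Y)$, where $D_Y\sim_{\mathbb{R},f}K_Y+\Gamma$ for a boundary $\Gamma$ with $f_*\Gamma\le\Delta$ and $\lfloor\Exc_f(\Gamma)\rfloor=0$. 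By the Leray spectral sequence, $R^i\mu_*\mathcal{O}_V(D')=R^if_*\mathcal{O}_Y(D_Y)$, and a standard round-down computation together with the negativity lemma (comparing $\Gamma$ with the crepant boundary $\Delta_Y$) further reduces the claim to the single vanishing
$$R^if_*\mathcal{O}_Y(K_Y+\lfloor\Delta_Y\rfloor)=0\qquad(i>0)$$
in a neighbourhood of $x$. This part is routine and parallels the proof of Corollary~\autoref{GR}.

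\textbf{The dichotomy.} If $x$ is not a zero-dimensional lc center of $(X,\Delta)$, then, shrinking $X$ around $x$, the pair $(X,\Delta)$ has no zero-dimensional lc centers, and Corollary~\autoref{GR} gives the desired vanishing. So assume $x$ is a zero-dimensional lc center. Then $E:=\Exc(f)$ — equivalently, the union of the coefficient-one exceptional components — maps to $x$ and is a projective surface. Writing $\Delta_E$ for the different of $\Delta_Y-E$ on $E$, crepancy gives $K_E+\Delta_E=(K_Y+\Delta_Y)|_E\sim_{\mathbb{Q}}0$, so $(E,\Delta_E)$ is a projective (semi-)log canonical log Calabi--Yau surface; since $\Delta$ has standard coefficients and standard coefficients are preserved under adjunction, $\Delta_E$ again has standard coefficients.

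\textbf{The key vanishing.} From the adjunction sequence for $E$,
$$0\to\mathcal{O}_Y(K_Y+\lfloor\Delta_Y\rfloor-E)\to\mathcal{O}_Y(K_Y+\lfloor\Delta_Y\rfloor)\to\mathcal{O}_E\big((K_Y+\lfloor\Delta_Y\rfloor)|_E\big)\to 0,$$
the first term has vanishing higher direct images by Proposition~\autoref{dltmodif} (exactly as in Corollary~\autoref{GR}, since $-E$ is the relevant relatively nef part of $f^*(K_X+\Delta)-E$), so it suffices to show $H^i\big(E,\mathcal{O}_E((K_Y+\lfloor\Delta_Y\rfloor)|_E)\big)=0$ for $i>0$. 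By adjunction $(K_Y+\lfloor\Delta_Y\rfloor)|_E=K_E+B_E$ for an effective $\mathbb{Q}$-divisor $B_E$ with $\lfloor B_E\rfloor\le\lfloor\Delta_E\rfloor$ and $-(K_E+B_E)\sim_{\mathbb{Q}}\Delta_E-B_E\ge 0$; thus $H^i(E,\mathcal{O}_E(K_E+\lfloor B_E\rfloor))$ is precisely a Kodaira-type cohomology on the log Calabi--Yau surface $(E,\Delta_E)$ with standard coefficients, which vanishes for $p>p_0$ by Kawakami's theorem~\cite{KKVV}. The standard-coefficient hypothesis is what forces the relevant surfaces $(E,\Delta_E)$ into a bounded family, so $p_0$ may be chosen independently of $(X,\Delta)$. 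Feeding this back into the sequence gives $R^if_*\mathcal{O}_Y(K_Y+\lfloor\Delta_Y\rfloor)=0$ for $i>0$, which completes the proof.

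\textbf{Expected main obstacle.} The delicate point is the last paragraph: matching the cohomology group produced by adjunction with the exact hypotheses of the vanishing theorem of~\cite{KKVV}, dealing with the possible reducibility and non-normality of $E$ (either by passing to the normalization and tracking the conductor, or by invoking an slc version of the vanishing), and verifying that ``standard coefficients'' genuinely yields one characteristic bound $p_0$ valid uniformly over all log canonical threefolds. The reductions in the first two paragraphs are standard and essentially identical to the proof of Corollary~\autoref{GR}.
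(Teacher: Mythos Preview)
Your proposal diverges from the paper's proof and contains a genuine circularity. The paper's argument is a one-line modification of Corollary~\autoref{GR}: replace the appeal to Proposition~\autoref{dltmodif} by Proposition~\autoref{dltmodifII}. The point is that Proposition~\autoref{dltmodifII} already packages everything needed---it invokes Proposition~\autoref{KVVmain}(3), which in turn runs the iterative peeling argument of Proposition~\autoref{birKVV} and applies Kawakami's theorem \emph{component by component} on each normalized $E_i^{\nu}$ (via Lemma~\autoref{point}) whenever $f(E_i)$ is a point, and Lemma~\autoref{curve} otherwise. No dichotomy on $x$ is performed, and no adjunction sequence to the whole of $E$ is needed.

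Your route has two real problems. First, the ``standard round-down together with the negativity lemma'' reduction to the single divisor $K_Y+\lfloor\Delta_Y\rfloor$ is not justified: the statement concerns \emph{all} $D_Y\sim_{\mathbb{R},f}K_Y+\Gamma$ with $f_*\Gamma\le\Delta$ and $\lfloor\Exc_f(\Gamma)\rfloor=0$, and there is no mechanism here that collapses this family to one divisor. The paper never makes such a reduction; it applies strong GR-vanishing (Proposition~\autoref{dltmodifII}) to $D_{X^{dlt}}=g_*D$ directly. Second, and more seriously, in your adjunction sequence you claim the first term $\mathcal{O}_Y(K_Y+\lfloor\Delta_Y\rfloor-E)$ has vanishing higher direct images ``by Proposition~\autoref{dltmodif}''. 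But Proposition~\autoref{dltmodif} has as a \emph{hypothesis} that $(X,\Delta)$ has no zero-dimensional lc centers---precisely the case you are in. To get vanishing for that first term you would need Proposition~\autoref{KVVmain}(3), i.e.\ exactly the machinery that already gives the full result without your detour. So the argument is circular.

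There are also the issues you flag yourself: when $x$ is a zero-dimensional lc center, $\Exc(f)$ need not map entirely to $x$ (some components can dominate curves), and applying \cite{KKVV} to a possibly reducible, non-normal $E$ is not immediate. The paper sidesteps both by working on one irreducible $E_i^{\nu}$ at a time inside the Proposition~\autoref{birKVV} loop. Finally, the vanishing for $h\colon V\to Y$ comes from \cite[Claim~2.5, Theorem~2.12]{BK} (weak GR over dlt threefolds), not from Proposition~\autoref{dltmodif}.
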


Finally, let us mention an application, joint with Q. Posva, to the study of log canonical centers. In \cite{AP}, we discuss some (semi-) normality properties of lc-centers in (sufficiently large) positive characteristics, see Theorem \autoref{AP} and Theorem \autoref{AP2} . The proofs of these theorems uses the birational vanishing obtained in Propositions \autoref{dltmodif} and \autoref{dltmodifII}, respectively.

\section{Acknowledgments} The original motivation for this note was to give an alternative formulation, and a simpler proof, of the main technical theorem in \cite{ABP}, (Theorem 1.5). We are therefore indebted to our coauthors in that project, Fabio Bernasconi and Zsolt Patakfalvi, for discussions related to that work. We would like to express our gratitude to Christopher Hacon for useful discussions and for reading previous drafts of this note. In particular, we thank him for several useful suggestions and corrections, and we are indebted to him for suggesting Proposition \autoref{dlt with ample}.  We also like to thank Quentin Posva for useful discussions. The author was supported by SNF \#P500PT\_203083.
\section{preliminaries} 

We will be working over an excellent local domain $R$ of finite Krull dimension and admitting a dualizing complex. In this article the residue field of $R$ will be assumed to be perfect of positive characteristic. \\

By a variety we shall mean a separated reduced, equidimensional, excellent scheme over $R$.  \\

The dimension of a variety $X$  shall always mean the absolute dimension and not the relative dimension of $X$ over $R$.\\


A $\mathbb{Q}$-divisor $D$ will be said to be $\Q$-Cartier if there exists an integer $m$ such that $mD$ is a Cartier divisor. A $\mathbb{Q}$-divisor will be said to be ample/nef/big if it is $\Q$-Cartier and an integer multiple is ample/nef/big as a line bundle. 
\\

A point $x\in X$ will be said to be $\mathbb{Q}$-factorial if for every $\mathbb{Z}$-divisor $D$ on $X$, there exists some neighborhood $U$ of $x$ such that $D_{|U}$ is $\mathbb{Q}$-Cartier. $X$ is said to be $\mathbb{Q}$-factorial if it is $\mathbb{Q}$-factorial at every point. \\

By a log pair $(X, \Delta)$, we shall mean a normal variety $X$ together with an effective $\Q$-divisor $\Delta$ on $X$ such that $K_X+\Delta$ is $\Q$-Cartier. By a klt (respectively lc, dlt) pair we shall mean a log pair with klt (respectively lc respectively dlt) singularities in the sense of \cite[Section 2]{KolMor} and \cite[Section 2.5]{BPTWW}. By \cite[Corollary 4.11]{Tan16} if $X$ is a surface and $(X, \Delta)$ is a dlt pair then $X$ is $\Q$-factorial.\\

We say that $\Delta$ has standard coefficients if they are contained in the set $\{1-\frac{1}{m}|m\in\mathbb{Z}\}\bigcup 1$.\\

A normal surface $X$ will be said to be a \emph{del Pezzo surface} if it has klt singularities and $-K_X$ is a $\Q$-Cartier and ample divisor. A log pair $(X, B)$ will be said to be a \emph{log del Pezzo surface} if it has klt singularities and $-(K_X+ B)$ is ample. \\

A normal surface $X$ will be said to be a \emph{Calabi-Yau surface} if it has lc singularities and $K_X$ is $\Q$- Cartier and numerically trivial. A log pair $(X, B)$ will be said to be a \emph{log Calabi-Yau surface} if it has lc singularities and $(K_X+ B)$ is numerically trivial. \\

For a $\mathbb{Q}$-divisor $D$, we will denote by $\lfloor D\rfloor $ and $\lceil D\rceil $ the round-down and round-up of $D$, respectively. The set of coefficients of $D$ will be denoted $\coeff(D)$, and for a irreducible divisor $E$ we will denote by $\coeff_E(D)$ the coefficient of $E$ in $D$.\\

For a log canonical pair $(X,E+ \Delta)$, where $E$ is an irreducible divisor disjoint from the support of $\Delta$ and with normalization $\nu\colon E^{\nu}\to E$, we will denote by 
$\Diff_{E^{\nu}}(\Delta)$ the different as definied in \cite[Defn. 4.2]{Kolsing}\\

If $g\colon Y\to X$ is a birational morphism between normal varieties then $\Exc(g)$ will denote the reduced exceptional divisor of $g$. If $\Delta$ is a divisor on $Y$ we will denote $\Exc(\Delta)$ the exceptional part of $\Delta$.  \\

\begin{definition}We will say that a proper morphism $g\colon Y\to X$ between normal varieties satisfies Kawamata--Viehweg vanishing, abbreviated \emph{KVV}, if whenever \begin{itemize} \item $D$ is a $\mathbb{Q}$-Cartier $\Z$-divisors on $Y$, and \item $\Delta$ is a $\mathbb{Q}$-boundary such that $(Y,\Delta)$ is klt and satisfying \item $D-(K_Y
+\Delta)$ is $g$-ample,  \end{itemize}  then $R^ig_*\sO_Y(D)=0$ for all $i>0$. \end{definition}

\begin{remark}\label{dltandnef}
Note that if $(Y,\Delta)$ is dlt and $D-(K_Y+\Delta)\sim_{\mathbb{Q},g}A$ is $g$-ample, then for $\epsilon$ small enough $\Delta'=(1-\epsilon)\Delta$ satisfies that $(Y,\Delta')$ is klt and that $D-(K_Y+\Delta')\sim_{\mathbb{Q},g}A+\epsilon \Delta$ is $g$-ample, \cite[Prop. 2.4.2]{KolMor}. Similarly, if $(Y,\Delta)$ is klt and $D-(K_Y+\Delta)$ is $g$-nef, and if there exists an effective $g$-anti-ample divisor $A$ on $Y$. Then, for all small enough $\epsilon$, the pair $(Y,\Delta+\epsilon A)$ is klt and $D-(K_Y+\Delta+\epsilon A)$ is $g$-ample. \end{remark}

\begin{remark}\label{assumfnef} If $(Y,\Delta)$ is dlt and $D-(K_Y+\Delta)$ is $g$-nef then the KVV type assumptions are usually not met, even if $Y$ admits an effective anti-$g$-ample divisor $A$. In this situation, we often additionally assume that we can choose $A$ to be $g$-exceptional and that $\lfloor \Delta \cap \Exc(g) \rfloor=0$ (see e.g., Corollary \autoref{GR}). Under this extra assumption, indeed $(Y,\Delta+\epsilon A)$ is dlt and $D-(K_Y+\Delta+\epsilon A)$ is $g$-ample for all $\epsilon>0$ small enough. \end{remark}

\section{A birational Kawamata--Viehweg vanishing theorem}

The following is essentially \cite[Prop. 3.6]{cascini2}.

\begin{prop}\label{birKVV}Let $f\colon X\to Y$ be a proper birational morphism of normal varieties such that there exists an $f$-ample $\mathbb{Z}$-divisor $-E$ satisfying: \begin{itemize}

\item $E>0$ and $\Supp(E)=\Exc(f)$ and every component $E_i$ of $E$ is $\Q$-Cartier.
\item Every reflexive rank-one sheaf on $X$ is $S_3$.
\item For every component $E_i$ of $E$ let $ \nu_i \colon E^{\nu}_i\to E_i$ denote the normalisation of $E_i$ and for $C_i=f(E_i)$ let $\nu_{C_i}\colon C^{\nu}_i\to C_i$ denote the normalization of $C_i$. Suppose that for all $i$ the morphism $g_i$  in the diagram below
$$\begin{tikzcd}
E^{\nu}_i\arrow{r}{\nu_i}\arrow{d}{g_i}& E_i\arrow{d}{f_{|E_i}}\\
C^{\nu}_i\arrow{r}{\nu_{C_i}}& C_i\end{tikzcd}$$
satisfies KVV.
\end{itemize}
For $\Delta\geq 0$ write $\Delta=B+\Delta_0$, where $\Delta_0$ is $f$-exceptional and $B$ has no $f$-exceptional component. 
Then, if $D$ is a $\Q$-Cartier $\mathbb{Z}$-divisor such that $D-(K_X+\Delta)$ is $f$-ample for a boundary $\Delta$, with $\coeff(\Delta)\leq 1$ and such that $(X, B+\Supp(E))$ is dlt, then $R^qf_*\sO_X(D)=0$ for all $q>0$. 
\end{prop}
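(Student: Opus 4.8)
The plan is to run a \emph{d\'evissage along the $f$-ample divisor $-E$}, reducing the desired vanishing on $X$ first to a vanishing on the exceptional divisor, and then — via the $S_3$-hypothesis — to a vanishing on the normalizations $E_i^{\nu}$ of its components, where one can invoke the assumed KVV for the morphisms $g_i$ together with adjunction. First I would normalize the setup: since each $E_i$ is $\Q$-Cartier, so is $E=\sum a_iE_i$, and after replacing $E$ by a suitable positive multiple we may assume $E$ is Cartier (this leaves every hypothesis intact: $-E$ is still $f$-ample, $\Supp(E)=\Exc(f)$, the $g_i$ are unchanged, and $(X,B+\Supp(E))$, $D-(K_X+\Delta)$, $\coeff(\Delta)\le 1$ are untouched). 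Put $\mathcal{F}_n\defeq\sO_X(D-nE)$, a reflexive rank-one sheaf. Because $-E$ is $f$-ample, relative Serre vanishing yields an integer $N$ with $R^qf_*\mathcal{F}_n=0$ for all $n\ge N$ and $q>0$. Since $\sO_X(-E)$ is now invertible we have $\mathcal{F}_{n+1}=\mathcal{F}_n\otimes\sO_X(-E)$, and tensoring $0\to\sO_X(-E)\to\sO_X\to\sO_E\to 0$ by the torsion-free sheaf $\mathcal{F}_n$ (with $\sO_E\defeq\sO_X/\sO_X(-E)$) gives short exact sequences
\[
0\longrightarrow\mathcal{F}_{n+1}\longrightarrow\mathcal{F}_n\longrightarrow\mathcal{G}_n\longrightarrow 0,\qquad \mathcal{G}_n\defeq\mathcal{F}_n\otimes\sO_E,
\]
with $\mathcal{G}_n$ supported on $E$. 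By descending induction on $n$, with base case $n\ge N$, the long exact sequence reduces everything to proving that $R^qf_*\mathcal{G}_n=0$ for every $n\ge 0$ and $q>0$.

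Next I would pass from the scheme $E$ to the normalizations of its components, and this is where the $S_3$-hypothesis does the work. Since $\mathcal{F}_n$ is reflexive, hence $S_2$, and is $S_3$ by assumption, and since $E$ is a Cartier divisor avoiding the generic point of $X$, the sheaf $\mathcal{G}_n$ is $S_2$ on the (possibly non-reduced, reducible, non-normal) scheme $E$. Filtering $\mathcal{G}_n$ first by the powers of the ideal of $E_{\mathrm{red}}$, then by the components $E_i$, and finally comparing each rank-one piece on $E_i$ with its reflexive hull on the normalization $E_i^{\nu}$, one gets a finite filtration whose reflexive graded pieces are $\nu_{i*}$ of reflexive rank-one sheaves $\mathcal{M}$ on the $E_i^{\nu}$. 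The point is that the $S_2$-property of $\mathcal{G}_n$ forces the remaining non-reflexive correction sheaves produced at each stage to be supported on proper closed subschemes of $E$ over which $f$ has no higher direct images, so that $R^qf_*\mathcal{G}_n=0$ (for $q>0$) will follow once $R^qf_*\nu_{i*}\mathcal{M}=0$ (for $q>0$) for each such $\mathcal{M}$.

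It then remains to treat each $\mathcal{M}=\sO_{E_i^{\nu}}(D_i)$ on $E_i^{\nu}$. Using the commuting square $\nu_{C_i}\circ g_i=(f_{|E_i})\circ\nu_i$ with $\nu_i$ and $\nu_{C_i}$ finite, one has $R^qf_*\nu_{i*}\mathcal{M}\cong\nu_{C_i*}R^qg_{i*}\mathcal{M}$, so it suffices that $R^qg_{i*}\mathcal{M}=0$ for $q>0$. Adjunction along $E_i$ for the dlt pair $(X,B+\Supp(E))$ gives $(K_X+B+\Supp(E))_{|E_i^{\nu}}=K_{E_i^{\nu}}+\Theta_i$ with $(E_i^{\nu},\Theta_i)$ dlt; tracking the twists appearing in the filtration of the previous step and using $\coeff(\Delta)\le 1$ together with the $f$-ampleness of both $D-(K_X+\Delta)$ and $-E$, one finds that $D_i-(K_{E_i^{\nu}}+\Gamma_i)$ is $g_i$-ample for some boundary $\Gamma_i$ with $(E_i^{\nu},\Gamma_i)$ dlt. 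Replacing $\Gamma_i$ by $(1-\epsilon)\Gamma_i$ for $0<\epsilon\ll 1$ makes the pair klt while preserving $g_i$-ampleness (the ample cone is open), exactly as in \autoref{dltandnef}; the hypothesis that $g_i$ satisfies KVV then gives $R^qg_{i*}\sO_{E_i^{\nu}}(D_i)=0$ for all $q>0$. Unwinding the reductions yields $R^qf_*\sO_X(D)=0$ for all $q>0$.

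The hard part will be the second step: verifying that the $S_3$-hypothesis on $X$ really does annihilate \emph{all} of the correction terms that appear when one d\'evissages $\mathcal{G}_n$ on the non-reduced, possibly non-normal exceptional divisor down to reflexive sheaves on the $E_i^{\nu}$ (the passages through $E_{\mathrm{red}}$, through the pairwise intersections of components, and through the conductors must each be controlled). Hand in hand with this is the bookkeeping in the last step — identifying $\mathcal{M}$ with a specific $\sO_{E_i^{\nu}}(D_i)$ and computing the boundary $\Gamma_i$ from the different and the accumulated divisorial twists — so that the klt and $g_i$-ampleness hypotheses required to apply KVV for $g_i$ are genuinely met. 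Everything else (relative Serre vanishing, the descending induction, and the finite-pushforward identities) is routine.
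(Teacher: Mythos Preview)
Your overall d\'evissage strategy is the right idea, but the execution diverges from the paper in a way that creates a real gap. The paper does \emph{not} make $E$ Cartier and then filter the non-reduced quotient $\mathcal{G}_n$. Instead it subtracts a single \emph{prime} component $E_{i_j}$ at each step: starting from $D_0=D$, $\Delta_0$, it chooses $\mu_j\ge 0$ so that $\Delta_j+\mu_jE$ acquires a component $E_{i_j}$ of coefficient exactly $1$, then sets $D_{j+1}=D_j-E_{i_j}$ and $\Delta_{j+1}=\Delta_j+\mu_jE-E_{i_j}$. The $S_3$ hypothesis is invoked via a lemma (cf.\ \cite{HW}, \cite[Lemma~5]{BK}) saying that when both $\sO_X(D_j)$ and $\sO_X(D_{j+1})$ are $S_3$, the cokernel of $\sO_X(D_{j+1})\hookrightarrow\sO_X(D_j)$ is \emph{already} $\nu_{i_j*}\sO_{E_{i_j}^{\nu}}(D_{i_j})$ for an explicit $D_{i_j}$, with no residual correction terms on lower strata. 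Since $E_{i_j}$ has coefficient $1$ in $B+\Delta_{j+1}+E_{i_j}$, adjunction for this dlt pair produces a boundary on $E_{i_j}^{\nu}$ to which KVV for $g_{i_j}$ applies directly. One then shows $\lambda_j=\sum\mu_i\to\infty$ and finishes by Serre-type vanishing for $D_j$ with $j\gg 0$.

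Your approach instead produces $\mathcal{G}_n$ on the non-reduced scheme $E$ and then filters. The step you flag as ``hard'' is genuinely unresolved in your outline: the assertion that the correction sheaves arising from the passage through $E_{\mathrm{red}}$, through intersections $E_i\cap E_j$, and through conductors are ``supported on proper closed subschemes of $E$ over which $f$ has no higher direct images'' is not justified and is generally false---for threefolds, $E_i\cap E_j$ can be a curve contracted to a point by $f$, and a line bundle on such a curve can have nonzero $H^1$. The $S_2$ property of $\mathcal{G}_n$ on $E$ does not by itself force these pieces to vanish, nor does it guarantee that the restriction of $\mathcal{G}_n$ to each $E_i$ is $S_2$ (so that it would coincide with a pushforward from $E_i^{\nu}$). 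Relatedly, your bookkeeping for $\Gamma_i$ is too vague: without the running adjustment of $\Delta_j$ that keeps the peeled component at coefficient exactly $1$, you cannot ensure the restricted pair is dlt/klt with the required $g_i$-ampleness. The paper's one-prime-component-at-a-time scheme is precisely designed to sidestep all of these issues: the $S_3$ lemma delivers the quotient directly on the normalization with a controlled $\Sigma\le\Diff$, so no intersection or conductor terms ever appear.
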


\begin{proof} 
Define $D=:D_0$, $\lambda_0:=0$. Let $A\defeq D-(K_X+\Delta)=D-(K_X+B+\Delta_0)$.
We will inductively define triples $\{(\lambda_j, D_j,\Delta_j)\}_{j\in \mathbb{Z}}$ such that 

\begin{enumerate}
\item $\lambda_j$ is a rational number such that 
$$0\leq \lambda_0 \leq \lambda_1 \leq \dots  \leq \lambda_j.$$
\item $\Delta_j$ is an effective $f$-exceptional divisor with $\coeff(\Delta_j)\leq 1$.
\item $D_j$ is a $\mathbb{Z}$-divisor such that $D_j-(K_X+B+\Delta_j)\sim_{\mathbb{Q}}A-\lambda_jE$.
\item If $R^qf_*\sO_X(D_j)=0$ for all $q>0$ then $R^qf_*\sO_X(D_{j-1})=0$ for all $q>0$.
\item $\lim_{j\to \infty}\lambda_j=\infty$.
\end{enumerate}

Let $\mu_j$ be the non-negative rational number such that $\coeff(\Delta_j+\mu_jE)\leq1$ and such that there exists at least one component $E_{i_j}$ of $\Delta_j+\mu_jE$ that is of coefficient one. Note that we allow $\Delta_j=0$ and in that case $\mu_j=\frac{1}{a}$ where $a$ is a maximal coefficient of $E$. It could also happen that $\Delta_j$ already has at least one coefficient which is equal to one and it that case the definition gives $\mu_j=0$. \\

We define $\lambda_{j+1}:=\lambda_j+\mu_j$, $\Delta_{j+1}=\Delta_j+\mu_jE-E_{i_j}$,  $D_{j+1}:=D_j-E_{i_j}$. By the assumption that every component of $E$ is $\mathbb{Q}$-Cartier, $K_X+B+ \Delta_j$ and $D_j$ are $\mathbb{Q}$-Cartier for all $j$ and $(1)_j$ and $(2)_j$ hold. We have 

$$D_{j+1}-(K_X+B+\Delta_{j+1})=D_j-(K_X+B+\Delta_{j})-\mu_jE$$
By induction $$D_j-(K_X+B+\Delta_{j})=A-\lambda_jE,$$ from this $(3)_j$  follows since $\lambda_{j+1}=\lambda_j+\mu_j$.\\

We will show $(4)_j$. To this end, since $\Supp(B+\Delta_{j+1})$ does not contain $E_{i_j}$, we have that $(X, B+\Delta_{j+1}+E_{i_j})$ is dlt, and 
$$D_j-(K_X+B+E_{i_j} +\Delta_{j+1})=A-\lambda_{j+1}E$$
is $f$-ample. 
Since $\sO_X(D_{j+1})$ and $\sO_X(D_j)$ are $S_3$ there exists a divisor $D_{i_j}$ on $E^{\nu}_{i_j}$ and an exact sequence (see \cite{HW} and \cite[Lemma 5]{BK})

$$\begin{tikzcd} 0\arrow{r} &\sO_X(D_{j+1})\arrow{r} &\sO_X(D_j)\arrow{r}&{\nu_{i_j}}_*\sO_{{E_{i_j}^{\nu}}}(D_{i_j})\arrow{r}&0\end{tikzcd}$$

such that  $D_{i_j}\sim_{\mathbb{Q}}{D_j}_{|E^{\nu}_{i_j}}-\Sigma$ where $\Sigma\leq \Diff_{E^{\nu}_{i_j}}(B+\Delta_j)$.  By adjunction \cite[Definition 4.2]{KolMor}, $$(K_X+B+E_{i_j} +\Delta_{j+1})_{|E^{\nu}_{i_j}}\sim_{\mathbb{Q}} K_{E^{\nu}_{i_j}}+\Diff_{E^{\nu}_{i_j}}(B+\Delta_{j+1}).$$

Therefore,  $D_{i_j} -(K_{E^{\nu}_{i_j}}+\Diff_{E^{\nu}_{i_j}}(B+\Delta_{j+1})-\Sigma)$ is $g_{i_j}$-ample. Since $0\leq \Diff_{E^{\nu}_{i_j}}(B+\Delta_{j+1})-\Sigma \leq \Diff_{E^{\nu}_{i_j}}(B+\Delta_{j+1})$ it follows from KVV for $g_{i_j}$ that $R^q{g_{i_j}}_*\sO_{{E_{i_j}^{\nu}}}(D_{i_j})=0$ for all $q>0$, see Remark \autoref{dltandnef}.
Since $\nu_{C_{i_j}}$ is affine this implies that $R^q(\nu_{C_{i_j}}\circ g_{i_j})_*=0$ for all $q>0$. By the commutative diagram and the fact that $\nu_{i_j}$ is affine it hence follows that $0=R^q(f_{|E_{i_j}}\circ \nu_{i_j})_*=R^q{f_{|E_{i_j}}}_*\circ {\nu_{i_j}}_*$. 
Therefore, by the long exact sequence of cohomology of right derived functors, we conclude that if $R^qf_*\sO_X(D_j)=0$ then $R^qf_*\sO_X(D_{j-1})=0$. This is $(4)$. \\

Finally, we show $(5)$. There exists an infinite increasing subsequence of positive integers $\{j_k\}_{k\in \mathbb{Z}_{>0}}$ such that $i_{j_k}$ is constant for all $k$. By possibly reordering the components of $E$ we may assume that $i_{j_k}=1$ for all $k$, i.e., $E_{i_{j_k}}=E_1$ for all $k\in \mathbb{Z}_{>0}$. Let $\{{j_k}\}$ be a maximal such sequence, that is, for any positive integer $r$ such that ${i_{j_k}}+r<i_{j_{k+1}}$, then $E_{{i_{j_k}}+r}\neq E_1$. By construction, for any $k\in \mathbb{Z}_{>0}$, we find $\coeff_{E_1}(\Delta_{j_k})=1$ and $\coeff_{E_1}(\Delta_{j_k+1})=0$. Denote by $e_1:=\coeff_{E_1}(E)$. Then, $$1=\coeff_{E_1}(\Delta_{j_{k+1}})=e_1\mu_{j_{k+1}}+\coeff_{E_1}(\Delta_{j_{k+1}-1}).$$
Since $j_{k+1}-1\neq j_k$ we find  $\coeff_{E_1}(\Delta_{j_{k+1}-1})=e_1\mu_{j_{k+1}-1}+\coeff_{E_1}(\Delta_{j_{k+1}-2})$.
For ever $k\in \mathbb{Z}_{>0}$ let $r_k\in \mathbb{Z}$ be such that $j_k+r_k=j_{k+1}$. Inductively, and using that $\coeff_{E_1}(\Delta_{{j_k}+1})=0$ we find 
$$1=\coeff_{E_1}(\Delta_{j_{k+1}})=\sum^{r_k}_{i=1}e_1\mu_{j_k+i}.$$ In particular, $$\sum_{k=1}^{\infty}\sum^{r_k}_{i=1}\mu_{j_k+i}=\infty,$$
since $\lambda_j=\sum_{i=0}^{j-1}\mu_i$ it follows that $\lim_{j\to \infty}\lambda_j=\infty$.\\

To prove the theorem it is sufficient to show that $R^qf_*\sO_X(D_j)=0$ for $q>0$ and $j>>0$. By $(5)_j$ and the definition of $D_j$ there exists some anti-ample divisor $E'$ supported on $\Exc(f)$ and such that for every integer $n$ there exists some integer $k_n$ such that $-nE'$ is contained in $D_{j}$ for all $j>k_n$.  Let $n>>0$ be such that $R^qf_*(-jE'+N)=0$ for all $j\geq n$ and all $f$- nef divisors $N$ \cite[Theorem 1.5]{Keeler}. By $(3)_j$ and $(5)_j$ we have that $D_j$ is $f$- ample for $j>>0$ and we can write $D_{j}\sim_{f,\mathbb{Q}} A'-\lambda'_jE$ for some $f$-ample $A'$ and some $\lambda'_j>0$ such that $\lim_{j\to \infty}\lambda'_j=\infty$. It follows from this, that for $j>>0$ big enough $D_{j}+nE$ is $f$-nef.  Therefore, it follows that $R^qf_*\sO_X(D_j)=0$ for $q>0$ and $j>> k_n$.


\end{proof} 

\section{Existence of dlt models with exceptional ample divisors}

In order to apply Proposition \autoref{birKVV} to a dlt modification $f$ of a log canonical singularity we need to assure the existence of an anti--effective, $f$-ample and exceptional divisor. If we reduce our attention to $\mathbb{Q}$-factorial schemes this can easily be assured.

\begin{lem}Let $f\colon Y \to X$ be a proper morphism between normal quasi--projective varieties. If $X$ is $\mathbb{Q}$-factorial then there exists an anti--effective, $f$-ample and exceptional divisor $A$ on $Y$ such that $\Supp(A)=\Exc(f)$.
\end{lem}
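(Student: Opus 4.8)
Here $f$ is to be taken birational (as is implicit in the reference to $\Exc(f)$), and the plan is to produce $A$ by taking any $f$-ample divisor on $Y$, subtracting off the pullback of its pushforward — which makes sense precisely because $X$ is $\mathbb{Q}$-factorial — and then reading off anti-effectivity and the support from the negativity lemma. First I would note that, $Y$ being quasi-projective, it carries an ample Cartier divisor $H$, which is automatically $f$-ample; since $f$ is proper and admits an $f$-ample line bundle it is in fact projective. As $X$ is $\mathbb{Q}$-factorial, the Weil divisor $f_*H$ is $\mathbb{Q}$-Cartier, so $f^*(f_*H)$ is a well-defined $\mathbb{Q}$-Cartier $\mathbb{Q}$-divisor on $Y$, and I set $A:=H-f^*(f_*H)$ (after multiplying by the $\mathbb{Q}$-Cartier index of $f_*H$ this becomes an honest $\mathbb{Z}$-divisor, if one insists on one, as Proposition \autoref{birKVV} does). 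Since $f$ is birational, $f_*f^*(f_*H)=f_*H$, whence $f_*A=0$; as distinct non-exceptional prime divisors of $Y$ have distinct nonzero images in $X$, this forces $A$ to be $f$-exceptional, and in particular $\Supp(A)\subseteq\Exc(f)$. Moreover $f^*(f_*H)$ is numerically $f$-trivial, so $A$ and $H$ are $f$-numerically equivalent, and hence $A$ is $f$-ample.

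It then remains to check that $A$ is anti-effective with $\Supp(A)=\Exc(f)$. Since $A$ is $f$-nef (being $f$-ample) and $f_*(-A)=0$ is effective, the negativity lemma applied along $f$ gives $-A\geq 0$. For the support, note that $A$ being $f$-ample means its restriction to each fibre $f^{-1}(x)$ is ample, so $A\cdot C>0$, equivalently $(-A)\cdot C<0$, for every curve $C$ contracted by $f$; if such a $C$ were not contained in $\Supp(-A)=\Supp(A)$ we would have $(-A)\cdot C\geq 0$ because $-A$ is effective, a contradiction. Hence every $f$-contracted curve lies in $\Supp(A)$. Finally, each exceptional prime divisor $E_i$ satisfies $\dim E_i>\dim f(E_i)$, so the general fibres of $f|_{E_i}$ are positive-dimensional; the contracted curves they contain sweep out a dense subset of $E_i$, and since $\Supp(A)$ is closed we conclude $E_i\subseteq\Supp(A)$. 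Thus $\Exc(f)\subseteq\Supp(A)$, and combined with the reverse inclusion already established, $\Supp(A)=\Exc(f)$, so $A$ has all the required properties.

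The only non-formal inputs are the negativity lemma (for the anti-effectivity) and the elementary fact that an $f$-ample divisor meets every $f$-contracted curve positively (for the support statement); everything else is bookkeeping with pullbacks and pushforwards of $\mathbb{Q}$-Cartier divisors, which is why the result is genuinely easy. The one point deserving a moment's care is that these two tools be available for proper birational morphisms of excellent normal varieties, in the generality the paper works in, but this is standard.
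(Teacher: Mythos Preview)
Your proof is correct and follows essentially the same idea as the paper's: take an ample divisor on $Y$, subtract the pullback of its pushforward (which is $\mathbb{Q}$-Cartier by $\mathbb{Q}$-factoriality of $X$), and observe that the result is $f$-exceptional, $f$-ample, and anti-effective. The paper's proof is a terse three lines (and contains a typo: ``on $X$'' should read ``on $Y$''), whereas you have spelled out the justifications the paper leaves implicit --- in particular the appeal to the negativity lemma for anti-effectivity, and the argument via $f$-contracted curves for the equality $\Supp(A)=\Exc(f)$, which the paper's proof does not address at all.
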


\begin{proof}Let $A$ be an ample divisor on $X$. Then $A'=f_*A$ is $\mathbb{Q}$-Cartier and $A-f^*(A')$ is exceptional, anti--effective and $f$-ample. 
\end{proof} 

Recent work of Koll\'ar, \cite{KolMMP}, shows that we can find a dlt--modification in dimension three which contains an ample exceptional divisor, also without assuming that the pair $(X,\Delta_X)$ is $\mathbb{Q}$-factorial. This dlt-modification will however, in general, not $\mathbb{Q}$-factorial.

\begin{prop}\label{dlt with ample} Let $(X,\Delta_X)$ be a log canonical threefold. Then there exists a proper crepant birational morphism $f\colon (Y,\Delta_Y)\to (X,\Delta_X)$ such that every component of $\Exc(f)$ is $\mathbb{Q}$-Cartier and appear with coefficient one in $\Delta_Y$, $(Y, \Delta_Y)$ is dlt, and there exists an ample $\mathbb{Q}$-Cartier $\mathbb{Z}$-divisor $A$ supported on $\Exc(f)$.
\end{prop}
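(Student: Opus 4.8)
The plan is to run a minimal model program on a $\mathbb{Q}$-factorial dlt modification and to ensure that an ample exceptional divisor survives on the output. First I would invoke the existence of a $\mathbb{Q}$-factorial dlt modification $g\colon (Y_0,\Delta_{Y_0})\to (X,\Delta_X)$ in dimension three, as established in Koll\'ar's recent work \cite{KolMMP} (this does not require $X$ quasi-projective, only excellent of dimension three with the hypotheses in place). By the preceding lemma applied to $g$ — or more precisely to each affine chart, patched via the $\mathbb{Q}$-factoriality of $Y_0$ — one gets an anti-effective, $g$-exceptional, $g$-ample $\mathbb{Z}$-divisor $A_0$ on $Y_0$ with $\Supp(A_0)=\Exc(g)$. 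The issue is that $Y_0$ has \emph{too many} exceptional divisors: $A_0$ is only ample over $X$, not ample in absolute terms, and a priori not supported on the full exceptional locus in the way the statement wants; moreover one wants the dlt modification in the conclusion to be crepant with every exceptional component of coefficient one, which is automatic for a dlt modification.

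Next, I would run a $(K_{Y_0}+\Delta_{Y_0})$-MMP over $X$; since $f$ is crepant, $K_{Y_0}+\Delta_{Y_0}\sim_{\mathbb{Q},g} 0$, so this is really an MMP with scaling of some $g$-ample divisor, equivalently a sequence of flips and divisorial contractions that does not change the push-forward to $X$ and preserves the dlt property and crepancy. The key point is to choose the polarization so that the MMP contracts all exceptional divisors \emph{except} those we wish to keep, and terminates with a model $f\colon (Y,\Delta_Y)\to (X,\Delta_X)$ on which $-(\text{residual exceptional divisor})$ is $f$-ample and, crucially, the residual exceptional divisor $E$ is now \emph{ample} on $Y$ (not merely $f$-ample). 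Concretely: after contracting down to a minimal dlt model whose exceptional locus over $X$ has no contractible components, the cone theorem forces every $K_Y+\Delta_Y$-trivial extremal ray to be generated by a curve mapping to a point of $X$, and one arranges that the residual $-E$ is a supporting function making $E$ relatively ample; then a further argument (using that the exceptional locus is $f$-exceptional and $f$ is projective, so $Y$ is projective over $X$ and $X$ is affine-local) promotes $f$-ampleness of $E$ to ampleness of $E$ on $Y$, since $X$ is the spectrum of a local ring and any $f$-ample divisor on a projective-over-affine scheme whose support contains the whole exceptional locus can be made ample by adding the pullback of an ample (here trivial, as $X$ is local) — more carefully, one uses that $Y\setminus E\cong X\setminus\{\text{center}\}$ and Nakai–Moishezon, checking positivity on curves in $E$ and curves not contracted by $f$.

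The main obstacle is exactly this last promotion and the bookkeeping of which exceptional divisors can be contracted: one must verify that running the MMP with a suitable scaling genuinely terminates (three-dimensional MMP for excellent dlt pairs, which is available in the cited range of hypotheses), that dlt-ness and crepancy are preserved at each flip and divisorial contraction, and that the terminal model has the property that the sum $E$ of remaining exceptional divisors is $\mathbb{Q}$-Cartier with $-E$ relatively ample — the $\mathbb{Q}$-Cartier property of each component is the delicate part once $\mathbb{Q}$-factoriality is lost, and it is recovered precisely because a divisorial MMP over $X$ keeps the remaining exceptional divisors $\mathbb{Q}$-Cartier by construction of the contractions. I would then finish by observing that ampleness of $E$ on $Y$ follows from Nakai–Moishezon together with the fact that the only complete curves in $Y$ are contained in $E$ (as $X$ is local and $f$ is birational), so positivity of $E\cdot C$ for all such $C$ suffices, and this positivity is what the MMP output guarantees.
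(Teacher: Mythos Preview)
Your proposal contains a genuine gap at the very first step. The preceding lemma in the paper requires the \emph{target} $X$ to be $\mathbb{Q}$-factorial, not the source: its proof pulls back an ample divisor via $f^*$ after pushing forward, and the push-forward is $\mathbb{Q}$-Cartier only because $X$ is $\mathbb{Q}$-factorial. Starting from a $\mathbb{Q}$-factorial dlt modification $g\colon Y_0\to X$ does not help: when $X$ is not $\mathbb{Q}$-factorial, the exceptional divisors of $g$ need not span $N^1(Y_0/X)$ (think of a $\mathbb{Q}$-factorialization contributing to the relative Picard rank without extracting divisors), so there is no reason for a $g$-ample divisor supported on $\Exc(g)$ to exist. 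This is exactly the difficulty the proposition is designed to overcome, and ``patching via the $\mathbb{Q}$-factoriality of $Y_0$'' does not address it.

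Two further issues: since your $g$ is already crepant, $K_{Y_0}+\Delta_{Y_0}\sim_{\mathbb{Q},g}0$ and there is no $(K_{Y_0}+\Delta_{Y_0})$-MMP to run over $X$; every extremal ray is trivial. And ``ample'' in the statement means $f$-ample (this is how it is used in Proposition~\ref{birKVV} and Proposition~\ref{KVVmain}), so your final Nakai--Moishezon discussion is aimed at the wrong target.

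The paper's approach is different in an essential way. It begins not with a dlt modification but with a log \emph{resolution} $g\colon V\to X$ that already carries an exceptional $g$-ample divisor; such resolutions exist by \cite{KWresamp}. One then runs a $(K_V+\Theta)$-MMP over $X$ with scaling of a carefully chosen exceptional $H$, where $\Theta=g_*^{-1}\Delta_X+\Exc(g)$. The crucial input is Koll\'ar's version of the MMP with scaling \cite[Theorems 2 and 9]{KolMMP}, whose steps are allowed to be non-$\mathbb{Q}$-factorial but which guarantees that each surviving exceptional divisor remains individually $\mathbb{Q}$-Cartier \emph{and} that an exceptional relatively ample $\mathbb{R}$-divisor persists on the output. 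One then perturbs to a $\mathbb{Q}$-divisor, checks the output is crepant (only discrepancy-$(-1)$ divisors survive), and verifies that dlt-ness is preserved along each step by showing each contraction is $(K+\Theta)$-negative. The existence of the exceptional ample is thus built into the resolution and carried through the MMP by Koll\'ar's theorem, rather than manufactured afterward.
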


\begin{proof}Let $g\colon (V,\Delta_V+\Exc(g))\to (X,\Delta_X)$ be a log resolution of  $(X,\Delta_X)$, where $\Delta_V=g_*^{-1}\Delta_X$ and such that there exists an exceptional $g$-ample anti-effective divisor on $V$. Indeed, such a resolution exist by \cite[Theorem 2.13] {BPTWW} and \cite[Theorem 1]{KWresamp}.  Let $\Theta=\Delta_V+\Exc(g)$, then $K_V+\Theta\sim_{g,\mathbb{Q}}\sum_i(1-a_i)E_i$ for some rational numbers $a_i\leq 1$. I.e.,  $g^*(K_X+\Delta_X)=K_V+\Delta_V+\sum_ia_iE_i$ where $a_i=-a(E_i,X, \Delta)$, for $a(E_i,X, \Delta)$ the discrepancy \cite[Defn. 2.25]{KolMor}. Let $e_i=1-a_i$, there exists real numbers $h_i$ that are linearly independent over $\mathbb{Q}[e_1, \dots ,e_n]$ such that $H=\sum_ih_iE_i$, satisfies that $-H$ is effective and $g$-ample and such that $K_V+\Theta +rH\sim_{g,\mathbb{Q}}\sum_i(e_i+rh_i)E_i$ is $g$-ample for some $r>0$. By \cite[Theorem G]{BPTWW} we can run a $K_V+\Theta$-MMP with scaling of $H$ over $X$, and we do so in the sense of \cite[Theorem 9]{KolMMP}. In particular, a single step of this MMP may contract an extremal face which is not an extremal ray and the steps of this MMP might not be $\mathbb{Q}$-factorial \cite[Warning 1.8.]{KolMMP}.\\

 Suppose this MMP terminates after $m$ steps, with $g_m\colon Y_m\to X$. Denote by $\Theta^m$, $H^m$ and ${E_i}^m$ the strict transforms of $\Theta$, $H$ and each of the ${E_i}'s$ respectively. By \cite[Theorem 2]{KolMMP} the divisor $-\sum_i(1-a_i)E^m_i$ is effective, and $\sum_i(1-a_i)E^m_i+ (r_{m-1}-\epsilon)H^m$ is a $g_m$-ample and $g_m$-exceptional $\mathbb{R}$-divisor for some $r_{m-1}<r$, and all $1>>\epsilon>0$. Moreover, for all $i$, $E_i^m$ is $\mathbb{Q}$-Cartier. The existence of a $g_m$-ample and $g_m$-exceptional $\mathbb{R}$-divisor assures, by possibly perturbing the coefficients, the existence of  a $g_m$-ample and $g_m$-exceptional $\mathbb{Q}$-Cartier $\mathbb{Q}$-divisor. By taking a multiple, we can assure the existence of an ample $\mathbb{Z}$-divisor $A$ supported on $\Exc(g_m)$.  Since the divisor $-\sum_i(1-a_i)E^m_i$ is effective, and by construction, so is $\sum_i(1-a_i)E^m_i$, it follows that  $-\sum_i(1-a_i)E^m_i=0$. Therefore, if $a_i\neq 1$ then $E_i$ was contracted by this MMP. Conversely, If $C$ is any curve not contained in $\Supp(\sum_i(1-a_i)E_i)$, then its intersection with this divisor is non negative and hence no such curve could have been contracted by this MMP. Hence, if $a_i=1$, then $E_i$ was not contracted. Therefore, every exceptional divisor of $g_m$ appears with coefficient $1$ in $\Theta^m$ and $K_{Y^m}+\Theta^m=g_m^*(K_X+\Delta)$, hence $g_m\colon (Y^m, \Theta^m)\to (X, \Delta)$ is crepant.\\

 To conclude the proof of the proposition, it is sufficient to show that $(Y^m, \Theta^m)$ is dlt. To this end, we check that the dlt condition is preserved for every step of this MMP, where the steps are as in \cite[Theorem 9]{KolMMP}. First, we check that being dlt is preserved under any divisorial contraction. Let $\Phi^i\colon (Y^i, \Theta^i)\to (Y^{i+1}, \Theta^{i+1})$ be a divisorial contraction. By \cite[Lemma 3.38, Cor. 3.44]{KolMor} it is sufficient to show that $K_{Y^i}+\Theta^i$ is $\Phi^i$-anti-ample. If $i=0$, and hence $Y^0=V$, we note that we choose $H$ to be ample over $X$, and hence since $\Phi^0$ contracts curves which are $K_{Y^0}+ \Theta^0+r_0H$ trivial, where $r_0>0$, it follows directly that this step is $K_{Y^0}+ \Theta^0$-negative. Let $i>0$, by \cite[Theorem 2, (3)]{KolMMP} we have that $K_{Y^i}+ \Theta^i+(r_{i-1}-\epsilon)H^i$ is $g_i$-ample for all $0<\epsilon <<1$, where $g_i\colon Y^i\to X$ is the structure morphism over $X$. Also, the curves contracted by $\Phi^i$ are $K_{Y^i}+ \Theta^i+r_{i}H^i$-trivial for $r_i<r_{i-1}$.  But these two things together implies that $H^i$ is $\Phi^i$-ample, which then implies that $K_{Y^i}+ \Theta^i$ is $\Phi^i$ anti-ample. This shows that being dlt is preserved by divisorial contractions.\\
 
Lastly, we check that if $(Y^i, \Theta^i)$ is dlt and 
$$\begin{tikzcd} (Y^i, \Theta^i)\arrow{rd}[swap]{\Phi} &&(Y^{i+1}, \Theta^{i+1})\arrow{ld}{\Phi^+}\\
&Z\end{tikzcd}$$ is a flip of the kind appearing in \cite{KolMMP} then $(Y^{i+1}, \Theta^{i+1})$ is dlt. By the same argument as in the divisorial case, we find that $H^i$ is $\Phi$-ample, and therefore $(K_{Y^i}+ \Theta^i)$ is $\Phi$-anti-ample.  By \cite[Theorem 2, (3)]{KolMMP} we have that $K_{Y^{i+1}}+ \Theta^{i+1}+(r_{i}-\epsilon)H^{i+1}$ is $g_{i+1}$-ample for $0<\epsilon <<1$, and since  $K_{Y^{i+1}}+ \Theta^{i+1}+r_{i}H^{i+1}$ is $\Phi^{+}$-trivial. This in turn implies that $H^{i+1}$ is $\Phi^+$-anti-ample, and hence  $K_{Y^{i+1}}+ \Theta^{i+1}$ is $\Phi^{+}$-ample. By \cite[Proof of Lemma 3.38]{KolMor}, we therefore find that for an arbitrary divisor $E$ over $Z$, we have $a(E, Y^i, \Theta^i)\leq a(E, Y^{i+1}, \Theta^{i+1})$ and the inequality is strict for any divisor $E$ with center contained in the flipping or flipped locus. This shows that $(Y^{i+1}, \Theta^{i+1})$ is dlt by \cite[Proof of Cor. 3.44]{KolMor}.


\end{proof}

\section{Vanishing theorems for morphisms of threefolds} 



\begin{lem}\label{point} There exists an integer $p_0$ with the following property. Let $\Delta$ be an effective divisor with standard coefficients.  Suppose that $x\in (X,\Delta)$ is a closed point which is a log canonical center of a three dimensional lc pair with perfect residue field $k_x$ of residue characteristic $p>p_0$ at $x$. Let $f\colon (Y,\Delta_Y+E)\to (X,\Delta_X)$ be a crepant birational contraction of threefolds such that $E$ is a reduced divisor, with support equal to the exceptional locus of $f$ and $(Y, \Delta_Y+ E)$ is dlt. If $E_i$ is a component of $E$ with $f(E_i)=x$, then $(E^{\nu}_i, \Diff_{E^{\nu}_i}(\Delta_Y+E-E_i))$ is a log canonical log Calabi-Yau surface over $k_x$. Moreover,  the following Kawamata--Viehweg vanishing type theorem holds on $E^{\nu}_i$;\\
for any $\mathbb{Z}$-divisor $D$ on $E^{\nu}_i$, such that there exists an effective $\mathbb{Q}$-divisor $\Delta'\leq \Diff_{E^{\nu}_i}(\Delta_Y+E-E_i)$ with $D-K_{E^{\nu}_i}-\Delta'$ ample, we have that $H^q(E^{\nu}_i, \mathcal{O}_{E^{\nu}_i}(D))=0$, for all $q>0$. \end{lem}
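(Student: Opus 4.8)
The plan is to deduce the assertion from Kawakami's vanishing theorem for log Calabi--Yau surfaces \cite{KKVV}; what has to be checked on our side is that $\bigl(E^{\nu}_i,\Diff_{E^{\nu}_i}(\Delta_Y+E-E_i)\bigr)$ is a projective lc log Calabi--Yau surface over $k_x$ whose boundary has coefficients confined to a set depending only on the standard set, and not on $(X,\Delta)$ or on $f$. First I would identify the surface. Since $E$ is reduced and $(Y,\Delta_Y+E)$ is a log pair, $\Delta_Y$ has no component along $\Exc(f)$, so $E_i$ has coefficient exactly one in $\Delta_Y+E$ and $\Delta_Y=f^{-1}_*\Delta_X$. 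Because $(Y,\Delta_Y+E)$ is dlt, adjunction along a component of the round-down \cite[Section~4]{Kolsing} (valid in the excellent setting by \cite[Section~2.5]{BPTWW}) shows that $E_i$ is normal, hence $E^{\nu}_i=E_i$, and that $\bigl(E^{\nu}_i,\Diff_{E^{\nu}_i}(\Delta_Y+E-E_i)\bigr)$ is dlt, in particular lc. It is a normal surface, as $E_i$ is a prime divisor on the threefold $Y$, and it is proper --- hence projective, a complete normal surface over a field being projective --- over $k_x$, since $f(E_i)=x$.

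Next I would establish the numerical triviality of the log canonical class and the coefficient bound. By crepancy, $K_Y+\Delta_Y+E\sim_{\Q}f^*(K_X+\Delta_X)$, and restricting to $E^{\nu}_i$ via adjunction gives
\[K_{E^{\nu}_i}+\Diff_{E^{\nu}_i}(\Delta_Y+E-E_i)\ \sim_{\Q}\ \bigl(f^*(K_X+\Delta_X)\bigr)\big|_{E^{\nu}_i}.\]
The right-hand side is $\Q$-linearly --- a fortiori numerically --- trivial: choosing $m$ with $m(K_X+\Delta_X)$ Cartier near $x$, one is restricting to $E^{\nu}_i$ the pullback along $f$ of a line bundle on $X$, and since $f$ contracts $E_i$ to the single point $x$ this pullback is trivial on $E^{\nu}_i$. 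Thus $\bigl(E^{\nu}_i,\Diff_{E^{\nu}_i}(\Delta_Y+E-E_i)\bigr)$ is an lc log Calabi--Yau surface over $k_x$. For the coefficients, $\Delta_Y=f^{-1}_*\Delta_X$ has standard coefficients, hence so does $\Delta_Y+E-E_i$, and therefore so does $\Diff_{E^{\nu}_i}(\Delta_Y+E-E_i)$, the set of standard coefficients being stable under adjunction (passage to the different) \cite[Section~4]{Kolsing}.

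Finally I would invoke \cite{KKVV}: let $p_0$ be the constant it provides for log Calabi--Yau surfaces with standard coefficients. Given a $\Z$-divisor $D$ on $E^{\nu}_i$ and an effective $\Q$-divisor $\Delta'\leq B:=\Diff_{E^{\nu}_i}(\Delta_Y+E-E_i)$ with $D-K_{E^{\nu}_i}-\Delta'$ ample, set $N:=B-\Delta'\geq 0$; then $D-(K_{E^{\nu}_i}+B)+N$ is ample with $0\leq N\leq B$, which is precisely the hypothesis of \cite{KKVV} for the lc log Calabi--Yau surface $(E^{\nu}_i,B)$ over the perfect field $k_x$ of characteristic $p>p_0$, and its conclusion is $H^q\bigl(E^{\nu}_i,\mathcal{O}_{E^{\nu}_i}(D)\bigr)=0$ for all $q>0$, as wanted. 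The only substantive ingredient is \cite{KKVV} itself; the point on our side that needs genuine care is the \emph{uniformity} of $p_0$ --- that the coefficients of $\Diff_{E^{\nu}_i}(\Delta_Y+E-E_i)$ stay in one fixed set as $(X,\Delta)$ and the crepant dlt model vary --- which is exactly the purpose of the standing assumption that $\Delta$ has standard coefficients. One must also keep in mind that adjunction, normality of round-down components, and \cite{KKVV} are all available in the excellent setting with perfect residue field, which is ensured by \cite{BPTWW} and the conventions fixed in the preliminaries.
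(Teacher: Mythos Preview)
Your proof is correct and follows essentially the same route as the paper: verify via adjunction that $(E^{\nu}_i,\Diff_{E^{\nu}_i}(\Delta_Y+E-E_i))$ is an lc log Calabi--Yau surface with standard coefficients, then invoke Kawakami's theorem \cite{KKVV}. One small point worth tightening: you obtain normality of $E_i$ from ``dlt adjunction'' with reference to \cite[Section~4]{Kolsing} and \cite[Section~2.5]{BPTWW}, but in positive and mixed characteristic the normality of coefficient-one components of a dlt threefold pair is not a purely formal consequence of the definition --- it is a theorem requiring $p>5$, and the paper accordingly cites \cite[Theorem~3.1]{BK} for this step. Since you are free to take $p_0>5$ this is only a matter of citing the right source, not a gap in the argument.
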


\begin{proof}Write $K_Y+\Delta_Y+E\sim_{f, \mathbb{Q}}0$, and hence by adjunction $(E^{\nu}_i,  \Diff_{E^{\nu}_i}(\Delta_Y+E-E_i))$ is log canonical and $K_{E^{\nu}_i}+\Diff_{E_i}(\Delta_Y+E-E_i)\sim_{\mathbb{Q},g_i}0$. Since $\Delta_Y$ is the strict transform of $\Delta$ the pair $(Y,\Delta_Y+E-E_i)$ has standard coefficients. Let $p_0>5 $ then \cite[Theorem 3.1]{BK} implies that $E_i$ is normal and $(E_i,  \Diff_{E_i}(\Delta_Y+E-E_i))$ is therefore log canonical with standard coefficients \cite[4.1, 4.4]{Kolsing}. Since $D-K_{E^{\nu}_i}-\Delta'$ is ample we can perturb $\Delta'$ into a boundary with coefficients strictly less than $1$. The vanishing theorem hence follows from \cite[Theorem 1.1]{KKVV}.
\end{proof}

\begin{lem}\label{curve} Let $f\colon Y\to X$ be a birational contraction of threefolds with $E:=\Exc(f)$ and such that $(Y,E)$ is potentially dlt. If $f(E_i)$ is a curve inside $X$ then the relative Kawamata--Viehweg vanishing theorem holds for $E^{\nu}_i\overset{g_i}{ \rightarrow} f(E_i)^{\nu}$.
\end{lem}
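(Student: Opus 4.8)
First note what kind of morphism $g_i$ is. Since $f$ is birational between threefolds and $E_i$ is a prime divisor on the normal threefold $Y$, the surface $E_i^{\nu}$ is normal, hence Cohen--Macaulay; and $f(E_i)^{\nu}$, being the normalisation of a curve, is a regular one-dimensional scheme. So $g_i$ is a projective, surjective morphism from a normal surface to a regular base of \emph{positive} dimension, and its fibres are one-dimensional. It is exactly this last feature that renders the statement free of any hypothesis on the residue characteristic: for surfaces, Kawamata--Viehweg vanishing fails in characteristic $p$ only \emph{absolutely}, i.e.\ for a projective surface mapping to a point, and never for a morphism whose base has positive dimension.

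The plan is therefore to deduce the lemma directly from the relative Kawamata--Viehweg vanishing theorem for (excellent) surfaces, which holds with no restriction on the characteristic; see \cite{Tan16}. Unwinding the definition of KVV, what must be shown is: for every $\mathbb{Q}$-Cartier $\mathbb{Z}$-divisor $D$ on $E_i^{\nu}$ and every $\mathbb{Q}$-boundary $\Delta$ with $(E_i^{\nu},\Delta)$ klt and $D-(K_{E_i^{\nu}}+\Delta)$ $g_i$-ample, one has $R^q{g_i}_*\sO_{E_i^{\nu}}(D)=0$ for all $q>0$. Since the fibres of $g_i$ are one-dimensional only $q=1$ is at issue, and $R^1{g_i}_*\sO_{E_i^{\nu}}(D)=0$ is precisely the conclusion of relative surface vanishing applied to $g_i$ and the klt pair $(E_i^{\nu},\Delta)$ — the hypothesis $\dim f(E_i)^{\nu}\geq 1$ being what eliminates the dependence on $p$. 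The assumption that $(Y,E)$ is potentially dlt plays no essential role for this lemma: through adjunction it merely guarantees that $E_i$ is already normal and that $E_i^{\nu}$ is a dlt, hence $\mathbb{Q}$-factorial, surface by \cite[Cor.~4.11]{Tan16}, whereas the $\mathbb{Q}$-Cartierness of $D$ is already built into the KVV hypotheses.

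If a self-contained argument is preferred, one can proceed by relative duality over a curve. Stein-factorise $g_i=h\circ\phi$ with $h$ finite; since $h$ is affine it suffices to prove $R^1\phi_*\sO_{E_i^{\nu}}(D)=0$. All fibres of $\phi$ are one-dimensional, hence $\phi$ is flat with Cohen--Macaulay (in fact Cartier-divisor) fibres, and relative duality identifies $R^1\phi_*\sO_{E_i^{\nu}}(D)$ with the $\sO_{C'}$-dual of $\phi_*\sO_{E_i^{\nu}}(K_{E_i^{\nu}/C'}-D)$, where $C'$ denotes the intermediate curve. On the generic fibre $F_\eta$ — an integral, regular, projective curve over $k(\eta)$, integral and normal because $E_i^{\nu}$ is — adjunction together with numerical triviality of a fibre gives $\deg(D|_{F_\eta}-K_{F_\eta})=\deg((D-K_{E_i^{\nu}}-\Delta)|_{F_\eta})+\deg(\Delta|_{F_\eta})>0$, so $H^1(F_\eta,\sO_{F_\eta}(D))=0$ by Serre duality; combined with torsion-freeness of $R^1\phi_*\sO_{E_i^{\nu}}(D)$ over $C'$ this forces the sheaf to vanish. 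The genuinely delicate step is exactly this torsion-freeness — equivalently, the vanishing of $H^1$ on the possibly non-reduced and reducible special fibres of $\phi$, which one controls via the negative semi-definiteness of the fibrewise intersection form (Zariski's lemma) — and it is precisely this point that it is cleanest to import from the established relative surface vanishing rather than to reprove here.
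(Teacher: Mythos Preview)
Your argument is correct and matches the paper's proof: both reduce to Tanaka's relative Kawamata--Viehweg vanishing for klt surfaces over a positive-dimensional base \cite[Theorem~3.3]{Tan16}, the point being that since $f(E_i)$ is a curve the morphism $g_i$ has one-dimensional fibres and the characteristic restriction disappears, while adjunction from the potentially dlt pair $(Y,E)$ makes $E_i^{\nu}$ a (potentially) klt, hence $\mathbb{Q}$-factorial, surface. Your optional third paragraph correctly flags that the torsion-freeness of $R^1\phi_*$ is the nontrivial step one is importing rather than reproving; the small aside that $E_i$ is ``already normal'' is not needed (and would require $p>5$), but since the statement is about $E_i^{\nu}$ this is harmless.
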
 

\begin{proof}By adjunction $E^{\nu}_i$ is a (potentially) klt surface, and therefore $\mathbb{Q}$-factorial \cite[Corollary 4.11]{Tan16}, thus klt. The morphism $g\colon E^{\nu}_i\to f(E_i)^{\nu}$ induced by restriction is a contraction onto a normal curve. Therefore, \cite[Theorem 3.3]{Tan16} implies that $R^qg_*\sO_{E^{\nu}_i}(D)=0$ for all $q>0$, and for every $\mathbb{Q}$-Cartier divisor $D$ that satisfies $D-(K_{E^{\nu}_i}-\Delta)$ is $g$-nef and big for some boundary $\Delta$, such that $( E^{\nu}_i, \Delta)$ is klt. 
\end{proof}

\begin{prop}[Birational Kawamata--Viehweg vanishing]\label{KVVmain} Let $f\colon X\to Y$ be a projective morphism of normal threefolds with closed points having perfect residue fields of characteristic $p>5$. Assume that there exists an $f$-ample $\mathbb{Z}$-divisor $-E$ with $\Supp(E)=\Exc(f)$ and such that $(X,\Supp(E))$ is potentially dlt. For any divisor $\Delta \geq 0$ write $\Delta=B+\Delta_0$ where $\Delta_0$ is $f$-exceptional and $B$ is without $f$-exceptional component. Let $D$ be a $\Q$-Cartier $\mathbb{Z}$-divisor such that $D-(K_X+\Delta)$ is $f$-ample and such that $(X, B+\Supp(E))$ is dlt. Then, $R^qf_*\sO_X(D)=0$ for all $q>0$ in the following three cases; \begin{enumerate}
\item If $\dim(f(E_i))\geq 1$ for all $i$.
\item If $\dim f(E_i)=0$ only for such $i$'s such that $E_i^{\nu}$ is a surface of log del Pezzo type.
\item There exists a $B'\geq B$ with standard coefficients, such that $K_X+B' +E\sim_{f,\mathbb{Q}} 0$, $(X,B'+E)$ is dlt, and $p>p_0$ as in Lemma \autoref{point}. 
\end{enumerate}

\end{prop}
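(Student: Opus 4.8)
The plan is to derive all three cases from Proposition \autoref{birKVV} --- directly in cases (1) and (2), and by re-running its proof with a weaker vanishing input in case (3) --- so the main work is to verify the hypotheses of that proposition (or of its proof) for the given $f$ and $E$. I take it as part of the setup (it holds, e.g., for the dlt models produced in Proposition \autoref{dlt with ample}) that the components of $E$ are $\mathbb{Q}$-Cartier. The $S_3$-condition on reflexive rank-one sheaves would come for free in characteristic $p>5$: since $(X,B+\Supp(E))$ is dlt, Remark \autoref{dltandnef} shows that $\bigl(X,(1-\epsilon)(B+\Supp(E))\bigr)$ is klt for all sufficiently small $\epsilon>0$, so $X$ is of klt type, and a threefold of klt type whose closed points have perfect residue field of characteristic $p>5$ has all of its reflexive rank-one sheaves Cohen--Macaulay --- in particular $S_3$ --- by \cite{ABL, HW}. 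The $f$-ampleness of $-E$, the support conditions, and the requirement that $(X,B+\Supp(E))$ be dlt are given directly.

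The substantive step is to check, for each component $E_i$ of $E$, that the induced morphism $g_i\colon E_i^{\nu}\to f(E_i)^{\nu}$ satisfies the form of Kawamata--Viehweg vanishing used in Proposition \autoref{birKVV}, splitting according to $\dim f(E_i)\in\{0,1\}$. Components with $\dim f(E_i)=1$ occur in all three cases and are handled uniformly: $f(E_i)$ is then a curve, $E_i^{\nu}$ is a potentially klt --- hence $\mathbb{Q}$-factorial, hence klt --- surface, $g_i$ is a contraction onto a normal curve, and Lemma \autoref{curve} applies (its nef-and-big hypothesis being weaker than the $f$-ampleness that actually occurs); this already proves case (1) in full. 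For components with $f(E_i)=\{x\}$ a point, $g_i$ is the structure morphism $E_i^{\nu}\to\Spec k_x$ and KVV for $g_i$ becomes a Kodaira-type vanishing statement for klt pairs on the surface $E_i^{\nu}$. In case (2), $E_i^{\nu}$ is of log del Pezzo type, and I would invoke the Kawamata--Viehweg vanishing theorem for surfaces of del Pezzo type over perfect fields of characteristic $p>5$ (see, e.g., \cite{ABL}). In case (3), the hypothesis that $(X,B'+E)$ be dlt forces $E$ to be reduced and $B'$ to have no $f$-exceptional component, so $f\colon(X,B'+E)\to(Y,f_*B')$ is a crepant dlt model of the log canonical threefold $(Y,f_*B')$; since $E_i$ is a coefficient-one exceptional divisor, $x=f(E_i)$ is a zero-dimensional log canonical center of $(Y,f_*B')$, the coefficients are standard, $k_x$ is perfect and $p>p_0$, so Lemma \autoref{point} supplies vanishing on $E_i^{\nu}$. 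With these verifications in place, cases (1) and (2) follow by a direct application of Proposition \autoref{birKVV}, while case (3) needs a small additional argument, described next.

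The main obstacle I anticipate is precisely in case (3): Lemma \autoref{point} only yields the restricted form of Kawamata--Viehweg vanishing in which the boundary is bounded by $\Diff_{E_i^{\nu}}(B'+E-E_i)$, whereas the definition of KVV used in Proposition \autoref{birKVV} is unrestricted, so I cannot quote that proposition verbatim. To bridge this I would re-run the inductive argument in the proof of Proposition \autoref{birKVV} in this case and observe that the surface boundary actually arising there, $\Diff_{E_{i_j}^{\nu}}(B+\Delta_{j+1})-\Sigma$ with $\Sigma\geq 0$, satisfies $\Diff_{E_{i_j}^{\nu}}(B+\Delta_{j+1})-\Sigma\leq\Diff_{E_{i_j}^{\nu}}(B'+E-E_{i_j})$ --- which follows from $B\leq B'$ together with the fact that $\Delta_{j+1}$ is effective, $f$-exceptional, has coefficients $\leq 1$, and is supported away from $E_{i_j}$, so that $B+\Delta_{j+1}\leq B'+E-E_{i_j}$ and one concludes by monotonicity of the different --- so that Lemma \autoref{point} is applicable at every step of that induction. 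The remaining, more routine, points are: matching the hypotheses of the del Pezzo vanishing theorem (perfect residue field, $p>5$) to the surface $E_i^{\nu}$ obtained from adjunction in case (2), and confirming that the components of $E$ are $\mathbb{Q}$-Cartier in the situations to which the proposition is applied.
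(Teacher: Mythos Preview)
Your proposal is correct and follows the same route as the paper's proof: verify the $S_3$ hypothesis via \cite{ABL, BK}, then reduce to Proposition \autoref{birKVV} by checking that each $g_i\colon E_i^{\nu}\to f(E_i)^{\nu}$ satisfies the required vanishing, using Lemma \autoref{curve} for curve images, \cite{ABL} for the del Pezzo case, and Lemma \autoref{point} for case (3).

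Your treatment of case (3) is in fact more careful than the paper's. The paper simply writes that ``Lemma \autoref{point} and Lemma \autoref{curve} imply the theorem in the third case,'' but as you observe, Lemma \autoref{point} only yields vanishing for boundaries $\Delta'\leq \Diff_{E_i^{\nu}}(B'+E-E_i)$, not full KVV in the sense of the Definition, so Proposition \autoref{birKVV} cannot be invoked as a black box. Your fix --- going back into the proof of Proposition \autoref{birKVV} and checking that the boundary $\Diff_{E_{i_j}^{\nu}}(B+\Delta_{j+1})-\Sigma$ arising there is always bounded by $\Diff_{E_{i_j}^{\nu}}(B'+E-E_{i_j})$ via $B\leq B'$, $\Delta_{j+1}\leq E-E_{i_j}$, and monotonicity of the different --- is exactly the right justification and fills in what the paper leaves implicit. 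Your caveat about $\mathbb{Q}$-Cartierness of the components $E_i$ is also apt: it is not stated in Proposition \autoref{KVVmain} but is required by Proposition \autoref{birKVV}, and is guaranteed in the paper's applications by Proposition \autoref{dlt with ample}.
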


\begin{proof} By the assumptions on the residue characteristic, \cite[Corollary 1.3]{ABL} and \cite[Theorem 3.1]{BK} implies that every rank one reflexive sheaf on $X$ is $S_3$. By Proposition \autoref{birKVV} it is therefore sufficient to show that the induced morphisms, $g_i\colon E^{\nu}_i\to C^{\nu}$, satisfies KVV for every component $E_i$ of $\Exc(f)$. However, $g_i\colon E^{\nu}_i\to C^{\nu}$ satisfies KVV under the first assumption by Lemma \autoref{curve} and under the second assumption by Lemma \autoref{curve} and \cite[Theorem 1.1]{ABL}. Finally Lemma \autoref{point} and Lemma \autoref{curve}  implies the theorem in the third case.
\end{proof}

\begin{prop}[KVV for dlt--modifications I]\label{dltmodif} Let $(X,\Delta)$ be a quasi--projective 
log canonical threefold without zero-dimensional log canonical centers and with closed points of perfect residue field of characteristic $p>5$. 
There exists a dlt-modification  $f\colon (X^{dlt}, \Delta_{X^{dlt}}+E)\to  (X,\Delta)$ 
of $(X,\Delta)$ such that $E=\Exc(f)$ is an integral divisor with $f^*(K_X+\Delta)= K_X + \Delta_{X^{dlt}}+E$ and such that $f$-satisfies the birational Kawamata--Viehweg vanishing (Proposition \autoref{KVVmain}). In particular, 
\begin{itemize} \item  (Strong) GR-vanishing holds for $f$ i.e., $R^if_*\sO_X(D)=0$ for any $\mathbb{Q}$-Cartier divisor $D$ such that $D-(K_{X^{dlt}}-\Delta')$ is f-nef for some $\mathbb{Q}$-divisor $\Delta'$, s.t., $0\leq \Delta'\leq \Delta_{X^{dlt}}+E$ and $\lfloor \Exc(\Delta')\rfloor=0$.
\item $R^if_*\sO_X(-E)=0$ for $i=1,2$ and $R^if_*\sO_X\cong R^if_{|E}\sO_E$ for $i>0$.
\end{itemize}
 \end{prop}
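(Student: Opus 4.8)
The plan is to construct $f$ directly from Proposition \autoref{dlt with ample}, to check that it falls into case (1) of the birational Kawamata--Viehweg vanishing of Proposition \autoref{KVVmain}, and then to read off the two bulleted consequences by the perturbation trick of Remark \autoref{assumfnef} together with the structure sequence of $E$. First I would apply Proposition \autoref{dlt with ample} to $(X,\Delta)$ to obtain a crepant birational morphism $f\colon (X^{dlt},\Delta_{X^{dlt}}+E)\to (X,\Delta)$ with $E:=\Exc(f)$ reduced, every component $E_i$ of $E$ being $\mathbb{Q}$-Cartier and of coefficient one in the crepant boundary, $(X^{dlt},\Delta_{X^{dlt}}+E)$ dlt, $f^{*}(K_{X}+\Delta)=K_{X^{dlt}}+\Delta_{X^{dlt}}+E$ with $\Delta_{X^{dlt}}=f_{*}^{-1}\Delta$, and --- either from the construction in that proof, or by the negativity lemma applied to the $f$-exceptional $f$-ample divisor it produces --- an $f$-ample $\mathbb{Z}$-divisor of the shape $-E^{\circ}$ with $E^{\circ}\geq 0$ and $\Supp(E^{\circ})=\Exc(f)$. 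Since $X$ is quasi-projective and this $f$ arises from a log resolution followed by a relative minimal model program, $f$ is projective; so the standing hypotheses of Proposition \autoref{KVVmain} are in force: $X^{dlt}$ is a normal threefold whose closed points have perfect residue field of characteristic $p>5$, $-E^{\circ}$ is an $f$-ample $\mathbb{Z}$-divisor with $\Supp(E^{\circ})=\Exc(f)$, and $(X^{dlt},\Supp(E^{\circ}))=(X^{dlt},E)$ is dlt (a fortiori potentially dlt), the last point because lowering the coefficients of the dlt pair $(X^{dlt},\Delta_{X^{dlt}}+E)$ to keep only $E$ preserves dlt-ness.

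Next I would observe that $f$ lands in case (1) of Proposition \autoref{KVVmain}. Each component $E_i$ of $E$ has coefficient one in the crepant boundary, hence is a log canonical place of $(X,\Delta)$, so $f(E_i)=\cent_{X}(E_i)$ is a log canonical centre of $(X,\Delta)$; as $(X,\Delta)$ has no zero-dimensional log canonical centre, $\dim f(E_i)\geq 1$ for all $i$, which is exactly the hypothesis of case (1). Therefore $f$ satisfies the birational Kawamata--Viehweg vanishing: for every $\mathbb{Q}$-Cartier $\mathbb{Z}$-divisor $D$ and every boundary $\Theta$ with $D-(K_{X^{dlt}}+\Theta)$ $f$-ample and $(X^{dlt},B_{\Theta}+E)$ dlt --- where $B_{\Theta}$ is the part of $\Theta$ without $f$-exceptional component --- one has $R^{q}f_{*}\sO_{X^{dlt}}(D)=0$ for all $q>0$.

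To deduce the first bullet, let $D$ be a $\mathbb{Q}$-Cartier $\mathbb{Z}$-divisor with $D-(K_{X^{dlt}}+\Delta')$ $f$-nef, where $0\leq\Delta'\leq\Delta_{X^{dlt}}+E$ and $\lfloor\Exc(\Delta')\rfloor=0$. Following Remark \autoref{assumfnef} I would set $\Theta:=\Delta'+\epsilon E^{\circ}$ for $0<\epsilon\ll 1$: because $\lfloor\Exc(\Delta')\rfloor=0$, $\Theta$ is again a boundary for $\epsilon$ small; its non-$f$-exceptional part is that of $\Delta'$, which is $\leq\Delta_{X^{dlt}}$, so $(X^{dlt},B_{\Theta}+E)$ is dlt by the same coefficient-lowering remark; and $D-(K_{X^{dlt}}+\Theta)=\big(D-(K_{X^{dlt}}+\Delta')\big)-\epsilon E^{\circ}$ is $f$-nef plus $f$-ample, hence $f$-ample. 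The previous paragraph then gives $R^{q}f_{*}\sO_{X^{dlt}}(D)=0$ for $q>0$. For the second bullet, I would apply this with $D=-E$ and $\Delta'=\Delta_{X^{dlt}}$: here $-E$ is $\mathbb{Q}$-Cartier, $\Exc(\Delta_{X^{dlt}})=0$, and $-E-(K_{X^{dlt}}+\Delta_{X^{dlt}})\sim_{f,\mathbb{Q}}-f^{*}(K_{X}+\Delta)\sim_{f,\mathbb{Q}}0$ is $f$-nef, so $R^{i}f_{*}\sO_{X^{dlt}}(-E)=0$ for all $i>0$, in particular for $i=1,2$. Finally, the long exact sequence of higher direct images of $0\to\sO_{X^{dlt}}(-E)\to\sO_{X^{dlt}}\to\sO_{E}\to 0$, together with $R^{i}f_{*}\sO_{X^{dlt}}(-E)=0$ for $i=1,2$ and $R^{i}f_{*}(-)=0$ for $i\geq 3$ (the fibres of the birational morphism $f$ of threefolds have dimension $\leq 2$), yields $R^{i}f_{*}\sO_{X^{dlt}}\cong R^{i}{f_{|E}}_{*}\sO_{E}$ for all $i>0$.

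The step I expect to need the most care is the bookkeeping in the perturbation for the first bullet: one must use $\lfloor\Exc(\Delta')\rfloor=0$ at once to keep $\Theta=\Delta'+\epsilon E^{\circ}$ a boundary and to keep $(X^{dlt},B_{\Theta}+E)$ dlt, and one must know that the $f$-ample exceptional divisor furnished by Proposition \autoref{dlt with ample} can be chosen anti-effective with support equal to the whole exceptional locus. Both points are exactly what Proposition \autoref{dlt with ample} and Remark \autoref{assumfnef} are designed to provide, so once $f$ is fixed the remainder is essentially formal.
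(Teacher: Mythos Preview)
Your proof is correct and follows essentially the same route as the paper: construct $f$ via Proposition~\ref{dlt with ample}, observe that the absence of zero-dimensional lc-centers forces $\dim f(E_i)\geq 1$ so that case~(1) of Proposition~\ref{KVVmain} applies, then perturb by a small multiple of the anti-effective $f$-ample exceptional divisor to pass from $f$-nef to $f$-ample and invoke the vanishing, and finally read off the second bullet from the structure sequence of $E$. Your write-up is in fact slightly more careful than the paper's in a couple of places---you spell out why the $f$-ample exceptional divisor can be taken anti-effective with full exceptional support (negativity lemma), why $(X^{dlt},E)$ is dlt (coefficient lowering), and why $f(E_i)$ is an lc-center---but the logical skeleton is identical.
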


\begin{proof}By Proposition \autoref{dlt with ample} there exists a crepant dlt--model $f\colon (X^{dlt}, \Delta_{X^{dlt}}+E)\to  (X,\Delta)$ that contains an $f$-ample divisor $A$ supported on $\Exc(f)=E$, such that each component of $E$ is $\mathbb{Q}$-Cartier and $f^*(K_X+\Delta)= K_{X^{dlt}}+\Delta_{X^{dlt}}+E$. Since $(X,\Delta)$ contains no zero-dimensional lc-centers we have that $\dim(f(E_i))\geq 1$ for every component $E_i$ of $\Exc(f)$ and hence all the conditions of Proposition \autoref{KVVmain} are met.\\ We now show that this implies the strong GR-vanishing for $f$. To this end, let $-\epsilon A=\sum_{i} a_iE_i$, for some small enough $a_i>0$, be $f$-anti-ample.  Write $D\equiv L +K_{X^{dlt}}+\Delta'$ where $L$ is a nef $\mathbb{R}$-divisor and $0\leq \Delta'\leq \Delta_{X^{dlt}}+E$ is a $\mathbb{Q}$-divisor with $\lfloor \Delta'\cap E\rfloor=0$. Then $D-(K_{X^{dlt}}+\Delta'+\sum_ia_iE_i)\equiv \epsilon A+L $ which is $f$-ample. Moreover, since $\lfloor \Delta'\cap E\rfloor=0$ the pair $(X,\Delta'+\sum_ia_iE_i)$ is dlt. Proposition \autoref{KVVmain} implies that $R^if_*\sO_X(D)=0$ for $i>0$. \\
We now prove that $R^if_*\sO_X(-E)=0$, for $i=1,2$, and that $R^if_*\sO_X\cong R^if_{|E}\sO_E$ for $i>0$. By the long exact sequence of cohomology applied to the standard short exact sequence of $E$ inside $X$, the isomorphism  $R^if_*\sO_X\cong R^if_{|E}\sO_E$, for $i>0$, follows directly from the claimed vanishing $R^if_*\sO_X(-E)=0$, for $i=1,2$. The latter is just a special case of strong GR vanishing. Indeed, $-E-K_{X^{dlt}}-\Delta_{X^{dlt}}\sim_{f,\mathbb{Q}}0$, and is hence $f$- nef and $\lfloor \Delta_{X^{dlt}}\cap E\rfloor=0$.
\end{proof}

\begin{prop}[KVV for dlt--modifications II]\label{dltmodifII} There exists an integer $p_0$ with the following property. Assume that $\Delta\geq 0$ is a boundary with standard coefficients. Let $(X,\Delta)$ be a quasi--projective log canonical threefold  with closed points of perfect residue fields, each of characteristic $p>p_0$.  Let $f\colon  (X^{dlt}, \Delta_{X^{dlt}}+E)\to (X,\Delta)$ be the dlt modification in Proposition \autoref{dlt with ample}. Then,
\begin{itemize} \item  (Strong) GR-vanishing holds for $f$ i.e., $R^if_*\sO_X(D)=0$ for any $\mathbb{Q}$-Cartier $\mathbb{Z}$-divisor $D$ such that $D-(K_{X^{dlt}}+\Delta')$ is f-nef for some $\mathbb{Q}$-divisor $0\leq \Delta'\leq \Delta_{X^{dlt}}+E$ and $\lfloor \Exc(\Delta')\rfloor=0$.
\item $R^if_*\sO_X(-E)=0$ for $i=1,2$ and $R^if_*\sO_X\cong R^if_{|E}\sO_E$ for $i>0$.
\end{itemize}
 \end{prop}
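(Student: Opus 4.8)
The plan is to follow the proof of Proposition~\autoref{dltmodif} line for line, the only difference being that the hypothesis ``no zero-dimensional lc-centers''---which there guaranteed $\dim f(E_i)\geq 1$ for every $i$ and so placed us in case~(1) of Proposition~\autoref{KVVmain}---is now replaced by the standing hypothesis that $\Delta$ has standard coefficients, which puts us instead in case~(3). Accordingly I take $p_0$ to be the constant produced by Lemma~\autoref{point} (so $p_0>5$), which is the same $p_0$ governing case~(3) of Proposition~\autoref{KVVmain}.

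First I would record the output of Proposition~\autoref{dlt with ample}: the morphism $f\colon (X^{dlt},\Delta_{X^{dlt}}+E)\to (X,\Delta)$ is proper, birational and crepant, with $f^*(K_X+\Delta)=K_{X^{dlt}}+\Delta_{X^{dlt}}+E$ and $\Delta_{X^{dlt}}=f_*^{-1}\Delta$; every component of $E=\Exc(f)$ is $\mathbb{Q}$-Cartier and appears with coefficient one; $(X^{dlt},\Delta_{X^{dlt}}+E)$ is dlt; and there is an $f$-ample $\mathbb{Z}$-divisor $A$ with $\Supp(A)=E$. Since $\Delta$ has standard coefficients so does its birational transform $\Delta_{X^{dlt}}$, and since $E$ is reduced, $\Supp(E)=E$. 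As in the proof of Proposition~\autoref{dltmodif}, the negativity lemma lets us write $-\epsilon A=\sum_i a_iE_i$ with all $a_i>0$ for $\epsilon>0$ small.

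For the strong GR-vanishing, let $D$ be a $\mathbb{Q}$-Cartier $\mathbb{Z}$-divisor such that $D-(K_{X^{dlt}}+\Delta')$ is $f$-nef for some $\mathbb{Q}$-divisor $0\leq\Delta'\leq\Delta_{X^{dlt}}+E$ with $\lfloor\Exc(\Delta')\rfloor=0$. Then $D-(K_{X^{dlt}}+\Delta'+\sum_i a_iE_i)$ is $f$-ample, being the sum of the $f$-nef divisor $D-(K_{X^{dlt}}+\Delta')$ and the $f$-ample divisor $\epsilon A$. Write $\Delta'+\sum_i a_iE_i=B+\Delta_0$ with $B\leq\Delta_{X^{dlt}}$ having no $f$-exceptional component and $\Delta_0$ being $f$-exceptional; since $B\leq\Delta_{X^{dlt}}$ and $(X^{dlt},\Delta_{X^{dlt}}+E)$ is dlt, the pair $(X^{dlt},B+\Supp(E))=(X^{dlt},B+E)$ is dlt. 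To invoke case~(3) of Proposition~\autoref{KVVmain} I take $B':=\Delta_{X^{dlt}}\geq B$: it has standard coefficients, $(X^{dlt},B'+E)$ is dlt, $K_{X^{dlt}}+B'+E=f^*(K_X+\Delta)\sim_{f,\mathbb{Q}}0$, and $p>p_0$ by hypothesis. Hence Proposition~\autoref{KVVmain}(3) gives $R^if_*\sO_{X^{dlt}}(D)=0$ for all $i>0$.

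The remaining assertions are formal. Applying the vanishing just proved with $D=-E$ and $\Delta'=\Delta_{X^{dlt}}$---valid since $\Exc(\Delta_{X^{dlt}})=0$ and $-E-(K_{X^{dlt}}+\Delta_{X^{dlt}})\sim_{f,\mathbb{Q}}0$ is $f$-nef---gives $R^if_*\sO_{X^{dlt}}(-E)=0$ for $i=1,2$; vanishing for $i\geq 3$ is automatic since $f$ is a birational morphism of threefolds. Substituting this into the long exact sequence of $0\to\sO_{X^{dlt}}(-E)\to\sO_{X^{dlt}}\to\sO_E\to 0$ yields $R^if_*\sO_{X^{dlt}}\cong R^if_{|E}\sO_E$ for all $i>0$. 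I expect the only genuinely delicate point---more bookkeeping than mathematics---to be checking that the hypotheses of case~(3) of Proposition~\autoref{KVVmain} are really in force, i.e.\ that the crepant boundary $\Delta_{X^{dlt}}$ of the dlt model of Proposition~\autoref{dlt with ample} can legitimately play the role of the standard-coefficient auxiliary divisor $B'$; no geometric input beyond Propositions~\autoref{dlt with ample} and~\autoref{KVVmain} is needed.
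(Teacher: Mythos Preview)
Your proposal is correct and follows essentially the same approach as the paper's proof, which simply cites Proposition~\ref{KVVmain}(3) together with Remark~\ref{assumfnef} for the nef-to-ample perturbation, and then refers back to the proof of Proposition~\ref{dltmodif} for the consequences about $R^if_*\sO_{X^{dlt}}(-E)$ and $R^if_*\sO_E$. Your version just unpacks these references explicitly; in particular, your verification that $B':=\Delta_{X^{dlt}}$ satisfies the hypotheses of case~(3) is exactly what the paper means by ``$(X^{dlt},\Delta_{X^{dlt}}+E)$ has standard coefficients,'' and your perturbation by $\epsilon A$ is exactly the content of Remark~\ref{assumfnef}.
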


\begin{proof} Let $p_0>5$ be as in  Lemma \autoref{point}. Since $(X^{dlt}, \Delta_{X^{dlt}}+E)$ has standard coefficients the third condition of Proposition \autoref{KVVmain} is satisfied. Strong GR-vanishing hence follows from Proposition \autoref{KVVmain} (3) and remark \autoref{assumfnef}. The vanishing $R^if_*\sO_X(-E)=0$, for $i=1,2$, follows from strong GR-vanishing as in the proof of Proposition \autoref{dltmodif}.
\end{proof}

\section{Main Theorems}

\begin{cor}\label{GR}Let $(X,\Delta)$ be a quasi--projective log canonical threefold without zero-dimensional log canonical centers. Assume that the closed points of $X$ have perfect residue fields of characteristic $p>5$. If $f\colon (Y, \Delta_Y)\to (X, \Delta)$ is crepant with $ (Y, \Delta_Y)$ log smooth then (weak) GR-vanishing holds for $f$ i.e., $R^if_*\sO_X(D)=0$ for any $\mathbb{Q}$-Cartier $\mathbb{Z}$-divisor $D$ such that $D\sim_{\mathbb{Q},f}K_Y-\Delta'$  for some effective $\mathbb{Q}$-divisor $\Delta'$, with $f_*\Delta'\leq \Delta$, and $\lfloor \Exc( \Delta') \rfloor=0$.
\end{cor}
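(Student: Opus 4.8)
The plan is to reduce to the dlt modification $h\colon (X^{dlt},\Delta_{X^{dlt}}+E)\to (X,\Delta)$ of Proposition \autoref{dltmodif}, which already satisfies strong GR-vanishing, and then transfer the vanishing to the arbitrary crepant log resolution $f$ through a common resolution. First I would record that $(X^{dlt},\Delta_{X^{dlt}}+E)$ also has no zero-dimensional lc centers: lc centers push forward to lc centers under crepant birational maps, and $h$ is birational, so a zero-dimensional lc center of the modification would produce one of $(X,\Delta)$. Then I would fix a sufficiently fine common log resolution $g\colon W\to X$ of $(Y,\Delta_Y)$, of $\Delta'$, and of $X^{dlt}$, with induced morphisms $p\colon W\to Y$ and $q\colon W\to X^{dlt}$ over $X$, chosen (after a harmless further blow-up, all schemes here being quasi-projective) so that $p$ and $q$ each carry an anti-effective exceptional divisor supported on their whole exceptional locus.

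The second step is the divisor bookkeeping. From $D$ on $Y$ with $D\sim_{\Q,f}K_Y-\Delta'$ I would build a $\Z$-divisor $D_W$ on $W$, roughly $D_W=\lceil p^{*}D+\Gamma\rceil$ for a small effective $p$-exceptional $\Gamma$ chosen so that $p_{*}\sO_W(D_W)=\sO_Y(D)$, arranged so that $D_W\sim_{\Q,g}K_W-\Delta'_W$ for an effective $\Q$-divisor $\Delta'_W$ with $g_{*}\Delta'_W\leq\Delta$ and $\lfloor\Exc(\Delta'_W)\rfloor=0$; here $\lfloor\Exc(\Delta'_W)\rfloor=0$ is forced by $\lfloor\Exc(\Delta')\rfloor=0$ together with the smoothness of $Y$, which makes the $p$-exceptional discrepancies large, and the analogous transform on $X^{dlt}$ produces a $\Z$-divisor $D^{dlt}$ with $q_{*}\sO_W(D_W)=\sO_{X^{dlt}}(D^{dlt})$ that satisfies the hypotheses of strong GR-vanishing on $X^{dlt}$. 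Granting that the resolution legs $p$ and $q$ satisfy weak GR-vanishing, i.e. $R^ip_{*}\sO_W(D_W)=R^iq_{*}\sO_W(D_W)=0$ for $i>0$, the Leray spectral sequences for $f\circ p=g=h\circ q$ give
$$R^if_{*}\sO_Y(D)\;=\;R^ig_{*}\sO_W(D_W)\;=\;R^ih_{*}\sO_{X^{dlt}}(D^{dlt})\;=\;0\quad(i>0),$$
the last vanishing by Proposition \autoref{dltmodif}.

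So everything comes down to weak GR-vanishing for the two resolution morphisms $p\colon W\to Y$ and $q\colon W\to X^{dlt}$, which I would obtain from Proposition \autoref{KVVmain}: the residue characteristic, the anti-effective exceptional divisors, and the dlt and boundary hypotheses on $D_W$ are in place, so it suffices to check that one of the three alternatives of that proposition holds for each leg. Exceptional divisors over positive-dimensional subsets are handled by alternative (1); the crux — and the step I expect to be the genuine obstacle — is the exceptional divisors contracted to closed points of $Y$ and of $X^{dlt}$. Such a point is necessarily a klt point of the relevant pair, precisely because $(X,\Delta)$, hence $Y$ and $(X^{dlt},\Delta_{X^{dlt}}+E)$, has no zero-dimensional lc center; I would use this, together with a careful choice of the common resolution $W$ over each such point (for instance a minimal resolution, or one built from Koll\'ar components), to ensure that the normalizations of those exceptional divisors are surfaces of log del Pezzo type, which places us in alternative (2) of Proposition \autoref{KVVmain}. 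Pinning down this control on the birational geometry of exceptional divisors over klt threefold singularities in characteristic $p>5$ is the delicate point of the argument; everything else is formal.
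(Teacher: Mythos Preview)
Your overall architecture matches the paper's: factor through the dlt modification $h\colon X^{dlt}\to X$ of Proposition~\ref{dltmodif}, establish vanishing along each leg, and assemble via Leray. The divergence is in how vanishing on the resolution leg is obtained.

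You try to verify Proposition~\ref{KVVmain} directly for $p\colon W\to Y$ and $q\colon W\to X^{dlt}$, which forces you into alternative~(2): every exceptional divisor contracted to a closed point must be of log del Pezzo type. You flag this as ``the delicate point'' but do not carry it out, and it is unclear that a common roof $W$ with this property for \emph{both} legs exists; already for $p$ alone (with $Y$ smooth) repeated blow-ups over a point can produce exceptional surfaces that are not of del Pezzo type. The paper sidesteps this entirely by citing \cite[Theorem~2.12 and Claim~2.5]{BK}, which give (i)~independence of weak GR-vanishing from the choice of log resolution, so one may take $Y$ itself to be a log resolution of $X^{dlt}$ and dispense with the roof $W$, and (ii)~weak GR-vanishing over any dlt threefold in residue characteristic $p>5$, which immediately yields $R^ig_*\sO_Y(D)=0$ and $g_*\sO_Y(D)=\sO_{X^{dlt}}(g_*D)$ for the leg $g\colon Y\to X^{dlt}$. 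Proposition~\ref{dltmodif} then handles $h$, and composition of derived functors finishes. So the gap you identify is real, and the remedy is not finer control of exceptional geometry but simply to invoke the already-established weak GR-vanishing over dlt bases from \cite{BK}.
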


\begin{proof} By \cite[Theorem 2.12]{BK} it is sufficient to prove the statement for some log resolution. Starting from a log resolution $ (Y,\Delta_Y)$ as in the proof of Proposition \autoref{dlt with ample} we get a rational morphism $\psi\colon (Y,\Delta_Y)\dashedrightarrow (X^{dlt}, \Delta^{dlt}+E)$ over $X$ where $h \colon (X^{dlt}, \Delta^{dlt}+E) \to (X, \Delta)$ is a dlt model containing an exceptional ample divisor. Upon replacing $ (Y, \Delta_Y)$ with a log resolution of $(X^{dlt}, \Delta^{dlt}+E)$ it is sufficient to prove that GR-vanishing holds for $f$ admitting a crepant factorization as in the diagram below:

$$\begin{tikzcd} (Y,\Delta_Y)\arrow{rd}[swap]{f} \arrow{r}{g}&  (X^{dlt}, \Delta^{dlt}+E)\arrow{d}{h}\\
&(X, \Delta)
\end{tikzcd}.$$

Let $D\sim_{f,\mathbb{Q}} K_Y+\Delta'$ for $f_*\Delta'\leq \Delta$, and let $D_{X^{dlt}}:=g_*D$ then \cite[Claim 2.5, Theorem 2.12]{BK} implies that $g_*\sO_Y(D)=\sO_{X^{dlt}}(D_{X^{dlt}})$ and all higher direct images $R^ig_*\sO_X(D)=0$, for all $i>0$. Let $\Delta'_{X^{dlt}}:=g_*\Delta'$,  then $D_{X^{dlt}}\sim_{\mathbb{Q},h}K_{X^{dlt}}+\Delta'_{X^{dlt}}$, where $h_*\Delta'_{X^{dlt}}=f_*\Delta'$ and $\lfloor \Delta'_{X^{dlt}} \cap \Exc(h)\rfloor=0$. 
By Proposition \autoref{dltmodif} $R^ih_*\sO_X(D_{X^{dlt}})=0$ for all $i>0$. We therefore conclude, by the composition of right derived functors, that $R^if_*\sO_X(D)=0$ for all $i>0$. 

\end{proof} 

\begin{cor}\label{GRII} There exists an integer $p_0>0$ with the following property. Let $(X,\Delta)$ be a quasi--projective log canonical threefold. Assume that $\Delta$ has standard coefficients. Assume that the closed points of $X$ have perfect residue fields of characteristic $p>p_0$. Then the weak GR-vanishing theorem holds over $(X,\Delta)$. I.e., if $f\colon (Y, \Delta_Y)\to (X, \Delta)$ is crepant with $ (Y, \Delta_Y)$ log smooth then $R^if_*\sO_X(D)=0$ for any $\mathbb{Q}$-Cartier $\mathbb{Z}$-divisor $D$ such that $D\sim_{\mathbb{Q},f}K_Y-\Delta'$  for some effective $\mathbb{Q}$-divisor $\Delta'$, with $f_*\Delta'\leq \Delta$ and $\lfloor \Exc(\Delta') \rfloor=0$.
\end{cor}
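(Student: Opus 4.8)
The plan is to run the proof of Corollary \autoref{GR} almost verbatim, replacing every use of Proposition \autoref{dltmodif} by the corresponding Proposition \autoref{dltmodifII} and carrying the standard--coefficients hypothesis along. So I would first take $p_0$ to be the integer produced by Proposition \autoref{dltmodifII}; in particular $p_0>5$, which guarantees that every rank one reflexive sheaf on a threefold is $S_3$ by \cite[Corollary 1.3]{ABL} and \cite[Theorem 3.1]{BK}, and that \cite[Theorem 2.12]{BK} reduces the statement to a single, conveniently chosen log resolution.

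Then I would carry out the same reduction as in Corollary \autoref{GR}: start from a log resolution of $(X,\Delta)$ of the type used in the proof of Proposition \autoref{dlt with ample}, observe that it factors over $X$ through the dlt modification $h\colon (X^{dlt},\Delta^{dlt}+E)\to(X,\Delta)$ of that proposition, and, after replacing $Y$ by a log resolution of $(X^{dlt},\Delta^{dlt}+E)$, reduce to an $f$ admitting a crepant factorization
$$\begin{tikzcd} (Y,\Delta_Y)\arrow{rd}[swap]{f}\arrow{r}{g}& (X^{dlt},\Delta^{dlt}+E)\arrow{d}{h}\\ &(X,\Delta).\end{tikzcd}$$
Here the key point is that $\Delta^{dlt}$ is the strict transform of $\Delta$, hence still has standard coefficients, while the components of $E$ appear with coefficient one; thus $(X^{dlt},\Delta^{dlt}+E)$ has standard coefficients and $K_{X^{dlt}}+\Delta^{dlt}+E=h^*(K_X+\Delta)$, which is exactly the input of Proposition \autoref{dltmodifII}. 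Given $D$ with $D\sim_{\mathbb{Q},f}K_Y+\Delta'$ (the sign convention of the proof of Corollary \autoref{GR}), $f_*\Delta'\leq\Delta$ and $\lfloor\Exc(\Delta')\rfloor=0$, I would set $D_{X^{dlt}}:=g_*D$ and $\Delta'_{X^{dlt}}:=g_*\Delta'$; by \cite[Claim 2.5, Theorem 2.12]{BK} one gets $g_*\sO_Y(D)=\sO_{X^{dlt}}(D_{X^{dlt}})$ and $R^ig_*\sO_Y(D)=0$ for $i>0$, while $D_{X^{dlt}}\sim_{\mathbb{Q},h}K_{X^{dlt}}+\Delta'_{X^{dlt}}$ with $h_*\Delta'_{X^{dlt}}=f_*\Delta'\leq\Delta$ and $\lfloor\Delta'_{X^{dlt}}\cap\Exc(h)\rfloor=0$. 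Proposition \autoref{dltmodifII} (strong GR--vanishing) then gives $R^ih_*\sO_{X^{dlt}}(D_{X^{dlt}})=0$ for $i>0$, and the Grothendieck spectral sequence for the composition $f=h\circ g$ yields $R^if_*\sO_Y(D)=0$ for all $i>0$.

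I do not expect a serious obstacle at the level of this corollary: all the real content sits inside Proposition \autoref{dltmodifII}, and the only new phenomenon relative to Corollary \autoref{GR} is that exceptional divisors $E_i$ contracted to points are now permitted. These are controlled by Lemma \autoref{point}, i.e. ultimately by Kawakami's Kodaira--type vanishing \cite[Theorem 1.1]{KKVV} for log canonical log Calabi--Yau surfaces with standard coefficients in characteristic $p>p_0$, together with the normality of such an $E_i$ from \cite[Theorem 3.1]{BK}. The verifications I would be careful not to skip are that the standard--coefficients hypothesis survives passage to the dlt modification (it does, since strict transforms preserve coefficients and the added exceptional components have coefficient one), and that the $p_0$ supplied by Proposition \autoref{dltmodifII} is larger than $5$, so that the threefold--level inputs — the $S_3$--ness of rank one reflexive sheaves and the reduction \cite[Theorem 2.12]{BK} to a log resolution — remain available.
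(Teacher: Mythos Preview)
Your proposal is correct and follows exactly the same approach as the paper: the paper's proof of Corollary \ref{GRII} is literally the one-line instruction to rerun the proof of Corollary \ref{GR} with Proposition \ref{dltmodifII} in place of Proposition \ref{dltmodif}, which is precisely what you do (with more detail than the paper gives). Your extra checks---that standard coefficients survive passage to the dlt model and that $p_0>5$ so the $S_3$ and \cite[Theorem 2.12]{BK} inputs remain available---are the right points to verify and present no difficulty.
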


\begin{proof} By replacing the reference to Proposition \autoref{dltmodif} with Proposition \autoref{dltmodifII}, in the proof of Corollary \autoref{GR}, the same proof works verbatim. 
\end{proof}

We can now recover a simplified version of the main technical result (Theorem 1.5) of \cite{ABP}. 
\begin{cor}\label{wild}Let $x\in (X,\Delta)$ be a log--canonical threefold singularity which is not a log--canonical center. Suppose that the closed points of $X$ have perfect residue field of characteristic $p>5$. Let $f\colon Y\to X$ be a dlt modification as in Proposition \autoref{dltmodif}, such that every component of $E:=\Exc(f)$ has coefficient one in the crepant boundary and maps surjectively onto a curve $C$. Then $$H^2_x(X,\mathcal{O}_X)\cong H^0_x(C,R^1{f_{|E}}_*\mathcal{O}_E).$$ \end{cor}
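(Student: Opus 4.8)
The strategy is to relate $H^2_x(X, \sO_X)$ to local cohomology on $X^{dlt}$ via the crepant dlt modification $f$, then use the vanishing established in Proposition \autoref{dltmodif} to collapse the relevant spectral sequence terms, and finally pass to the exceptional divisor $E$. First I would set up the Leray spectral sequence for $f$ applied to $\sO_Y$ (writing $Y=X^{dlt}$), together with the comparison of local cohomology supported at $x$ and at $f^{-1}(x)$: since $f$ is proper and birational, $Rf_*\sO_Y$ computes $R\Gamma_{f^{-1}(x)}$ against $R\Gamma_x$ on the base, so one gets a spectral sequence
$$E_2^{p,q}=H^p_x\big(X, R^qf_*\sO_Y\big)\Rightarrow H^{p+q}_{f^{-1}(x)}(Y,\sO_Y).$$
By Proposition \autoref{dltmodif}, $R^qf_*\sO_Y$ fits into the exact sequence coming from $0\to \sO_Y(-E)\to \sO_Y\to \sO_E\to 0$ with $R^qf_*\sO_Y(-E)=0$ for $q=1,2$, hence $R^qf_*\sO_Y\cong R^q(f_{|E})_*\sO_E$ for $q>0$, and $f_*\sO_Y=\sO_X$ (as $f$ is a crepant birational contraction between normal varieties). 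Since $x$ is not an lc-center but lies on the minimal lc-center $C$, and every component of $E$ surjects onto $C$, the fiber $f^{-1}(x)$ is one-dimensional (it is contained in the preimage of $x$ inside the surface $E$, and $E\to C$ is equidimensional of relative dimension one over the generic point of $C$, but over $x\in C$ it could a priori jump — this needs care), so $R^qf_*\sO_Y=R^q(f_{|E})_*\sO_E$ is supported on $C$ and vanishes for $q\geq 2$.

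Next I would compute the low-degree terms. Because $R^qf_*\sO_Y$ is supported on the curve $C$ for $q>0$ (indeed $R^1(f_{|E})_*\sO_E$ is a sheaf on $C$), we have $H^p_x(X, R^1f_*\sO_Y)=H^p_x(C, R^1(f_{|E})_*\sO_E)$, which vanishes for $p\geq 2$ since $\dim C = 1$, and also $H^0_x$ of it need not vanish. On the other hand $H^p_x(X,\sO_X)$: since $x$ is an lc but not lc-center threefold singularity lying on a one-dimensional minimal lc-center, it is neither klt nor plt, but $X$ is normal so $\depth_x\sO_X\geq 2$, giving $H^0_x=H^1_x=0$. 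Thus in total degree $2$ the spectral sequence reads
$$0\to H^2_x\big(X, f_*\sO_Y\big)\to H^2_{f^{-1}(x)}(Y,\sO_Y)\to H^0_x\big(C, R^1(f_{|E})_*\sO_E\big)\to H^3_x(X,\sO_X),$$
where the last arrow is the differential $d_3$ (the $d_2$ differential $H^0_x(R^1f_*\sO_Y)\to H^2_x(R^0f_*\sO_Y)=H^2_x(X,\sO_X)$ must be handled; I would argue it vanishes, or rather that the relevant edge map is what produces the isomorphism). I would then show $H^2_{f^{-1}(x)}(Y,\sO_Y)=0$: on the dlt model $Y$, using that $(Y,\Delta_{X^{dlt}}+E)$ is dlt and the lc-centers through $f^{-1}(x)$ are now at most one-dimensional (the components of $E$), one invokes that dlt threefold singularities in $p>5$ satisfy the depth statement — more precisely, $f^{-1}(x)$ lies in no zero-dimensional lc-center of $(Y,\Delta_{X^{dlt}}+E)$, so by \cite{ABL, BK} combined with the $S_3$-property, $\depth$ along $f^{-1}(x)$ is at least $3$, forcing $H^2_{f^{-1}(x)}(Y,\sO_Y)=0$. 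Combined with $H^2_x(X,f_*\sO_Y)=H^2_x(X,\sO_X)$ and the surjectivity/injectivity from the sequence, this yields $H^2_x(X,\sO_X)\cong H^0_x(C, R^1(f_{|E})_*\sO_E)$ after accounting for the direction of the exact sequence — note the stated isomorphism has $H^2_x$ on the left equal to $H^0_x$ of the $R^1$, so in fact I expect the correct bookkeeping is: $H^2_{f^{-1}(x)}(Y,\sO_Y)=0$ kills $H^2_x(X,\sO_X)$'s contribution and the edge map $H^2_x(X,\sO_X)\xrightarrow{\sim} H^0_x(C,R^1f_*\sO_E)$ arises as the connecting map, consistent with $H^3_{f^{-1}(x)}(Y,\sO_Y)$ controlling the cokernel. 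I would double-check this against \cite[Theorem 1.5]{ABP} for the precise indexing.

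The main obstacle I anticipate is the vanishing $H^2_{f^{-1}(x)}(Y,\sO_Y)=0$, i.e., controlling the depth of the dlt model $Y=X^{dlt}$ along the one-dimensional fiber $f^{-1}(x)$. This requires knowing that dlt threefold singularities in characteristic $p>5$ are such that $\sO_Y$ has depth $\geq 3$ at points of $f^{-1}(x)$, which in turn needs that $f^{-1}(x)$ avoids zero-dimensional lc-centers of $(Y, \Delta_{X^{dlt}}+E)$ — plausible since such a center would push forward to a zero-dimensional lc-center of $(X,\Delta)$ through $x$, contradicting that $x$ is not an lc-center and $C$ is the minimal one — together with the $S_3$ / Cohen–Macaulayness-type input from \cite{ABL, BK}. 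A secondary subtlety is verifying that $f^{-1}(x)$ is genuinely one-dimensional (not a surface), which follows because no component $E_i$ of $E$ can map to the point $x$ (they all surject onto $C$ by hypothesis) and $E_i\to C$ has one-dimensional fibers over $x\in C$ since $E_i$ is a surface and $C$ a curve; the $d_2$-differential $H^0_x(R^1f_*\sO_Y)\to H^2_x(\sO_X)$ also needs to be identified, but I expect it to be the zero map since its source and target are then both identified with the same group via the edge maps, or it vanishes for dimension reasons on $C$ versus $X$.
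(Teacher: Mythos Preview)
Your overall strategy matches the paper's: use the spectral sequence
\[
H^p_x\bigl(X, R^q f_*\sO_Y\bigr)\;\Rightarrow\;H^{p+q}_{f^{-1}(x)}(Y,\sO_Y),
\]
extract the five-term exact sequence (which, once straightened out, is exactly the sequence the paper writes down), and then identify $R^1f_*\sO_Y\cong R^1(f_{|E})_*\sO_E$ via Proposition~\ref{dltmodif}. You also correctly single out the vanishing $H^2_{f^{-1}(x)}(Y,\sO_Y)=0$ as the crux. However, your justification of this vanishing has a genuine gap.

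You argue that ``$\depth$ along $f^{-1}(x)$ is at least $3$'' because the points of $f^{-1}(x)$ avoid zero-dimensional lc-centers and $\sO_Y$ is $S_3$/Cohen--Macaulay by \cite{ABL, BK}. But $f^{-1}(x)$ is a curve in the threefold $Y$, so its generic points have codimension~$2$; at those points $\depth\,\sO_{Y,\eta}\leq 2$ no matter how nice $Y$ is. Hence $\inf_{y\in f^{-1}(x)}\depth_y\sO_Y\leq 2$, and the depth criterion only yields $H^0_{f^{-1}(x)}=H^1_{f^{-1}(x)}=0$, not $H^2_{f^{-1}(x)}=0$. (Already for a smooth curve $Z$ in a smooth threefold one has $H^2_Z(\sO)\neq 0$.) So Cohen--Macaulayness alone cannot close this step.

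The paper proceeds differently at exactly this point. Since $Y$ is Cohen--Macaulay, local duality identifies $H^k_{f^{-1}(x)}(Y,\sO_Y)$ with $(R^{3-k}f_*\omega_Y)_x$. Then $(R^2f_*\omega_Y)_x=0$ for dimension reasons (the fiber is one-dimensional), and $(R^1f_*\omega_Y)_x=0$ by applying the strong GR-vanishing of Proposition~\ref{dltmodif} with $D=K_Y$: writing $-\epsilon A=\sum_i a_iE_i$ $f$-anti-ample with $0<a_i<1$, one has $K_Y-(K_Y+\sum a_iE_i)$ $f$-ample and the boundary $\sum a_iE_i$ in the allowed range. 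Thus the vanishing of $H^2_{f^{-1}(x)}(Y,\sO_Y)$ is not a depth statement at all; it is precisely where the birational Kawamata--Viehweg vanishing of Proposition~\ref{dltmodif} is used a second time (the first being the identification $R^1f_*\sO_Y\cong R^1(f_{|E})_*\sO_E$).
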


\begin{proof} There is an associated spectral sequence $$H_x^i(X,R^jf_*\mathcal{O}_Y)\Rightarrow H_{f^{-1}x}^{i+j}(Y,\mathcal{O}_Y)$$ which induces a five term exact sequence containing the terms:
$$\adjustbox{scale=0.85,center}{
\begin{tikzcd} H^{1}_{f^{-1}x}(Y, \mathcal{O}_Y) \arrow{r} & H^0_x(X, R^1\pi_*\mathcal{O}_Y)\arrow{r} &H_x^2(X, \mathcal{O}_{X}) \arrow{r} &H^{2}_{f^{-1}x}(Y, \mathcal{O}_Y)\end{tikzcd}}.$$
By \cite[Cor 1.3.]{ABL} and \cite[Theorem 3.1]{BK} $Y$ is Cohen--Macaulay.
Local duality therefore gives, for $k\geq 1$, that $H^{k}_{\pi^{-1}x}(Y, \mathcal{O}_Y)\cong (R^{3-k}\pi_*\omega_Y)_x$. In particular, $H^{1}_{\pi^{-1}x}(Y, \mathcal{O}_Y)=0$ for dimension reasons and $H^{2}_{\pi^{-1}x}(Y, \mathcal{O}_Y)\cong (R^1f_*\omega_Y)_x$ which is zero by Proposition \autoref{dltmodif}. Indeed, let $0<a_i<1$ be such that $A:=-\sum_ia_iE_i$ is $f$-ample, then $K_Y-K_Y+ A$ is f-ample and the boundary $-A$ satisfies the assumptions of Proposition \autoref{dltmodif}. We therefore find $$H^0_x(X, R^1f_*\mathcal{O}_Y)\cong H_x^2(X, \mathcal{O}_{X}).$$
Lastly, Proposition \autoref{dltmodif} tells us that $R^1f_*\mathcal{O}_Y\cong R^1{f_{|E}}_*\mathcal{O}_E$. This proves the assertion.
\end{proof} 

\begin{remark}\label{wildrem} Corollary \autoref{wild} says that Koll\'ar's Theorem \autoref{lkvv} fails to be true at a non lc-center $x$ of $(X,\Delta)$ \emph{precisely} when the exceptional divisor of a dlt modification of $(X, \Delta)$ has a \emph{wild fiber} over $x$, see \cite[Section 4]{ABP} for a discussion on wild fibers. 
\end{remark}

\begin{Example}\label{Ex} Koll\'ar's Theorem \autoref{lkvv} can fail in every positive characteristic. 
Let $f\colon S\to C$ be a minimal wild elliptic surface in characteristic $p>0$. I.e., $f$ is of relative Picard rank one, $S$ and $C$ are smooth, $K_S\sim_{f,\mathbb{Q}} 0$ and $R^1f_*\mathcal{O}_S$ has torsion supported at some $x\in C$, i.e., $H^0_x(C,R^1f_*\mathcal{O}_S)\neq 0$. See \cite{kaue85} for examples of such elliptic fibrations. Let $\mathcal{L}$ be a relatively ample line bundle and consider the relative cone $\Spec_C(\bigoplus_i f _*\mathcal{L}^i)$. This is a log canonical threefold singularity with unique minimal log-canonical center the relative vertex, which is isomorphic to $C$. The point $x\in C$ is a non log canonical center, for which Koll\'ar's Theorem fails. Indeed, a dlt-modification as in Corollary \autoref{wild} is given by 
 $$\pi\colon \Spec_S(\bigoplus_i\mathcal{L}^i)\to \Spec_C(\bigoplus_i f _*\mathcal{L}^i),$$ with exceptional divisor of $\pi$ isomorphic to $S$ and $\pi_{|_S}=f\colon S\to C$. Therefore, Theorem \autoref{wild} implies that $H^2_x(X,\mathcal{O}_X)\cong H^0_x(C,R^1f_*\mathcal{O}_S)\neq 0$. This example is described in more detail in \cite[Section 5]{ABP}.
\end{Example}

We now mention applications of these vanishing theorems to the study of log-canonical centers (j.w., Quentin Posva) and to the study of the depth of log canonical singularities at a non-log canonical center (j.w., F. Bernasconi and Zs. Patakfalvi). 

\begin{thm}\label{AP}\cite[Theorem 1]{AP} Let $(X,\Delta)$ be a log canonical threefold whose closed points have perfect residue field of characteristic $p>5$. Then all minimal log canonical centers of $(X,\Delta)$ are normal. 
\end{thm}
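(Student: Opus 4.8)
The plan is to argue according to the dimension of $W$. If $\dim W=0$ then $W$ is a closed point, hence normal, and there is nothing to prove. If $\dim W=2$ then $W$ is a prime divisor with $\coeff_W(\Delta)=1$, and minimality forces $(X,\Delta)$ to have no log canonical center properly contained in $W$; since any lc center meeting $W$ but different from $W$ would produce, via adjunction on $W^\nu$, an lc center inside $W$, the pair $(X,\Delta)$ is plt — and hence dlt — in a neighbourhood of $W$, and then $W$, being a component of $\lfloor\Delta\rfloor$, is normal by \cite[Theorem 3.1]{BK}, exactly the input already used in Lemma \autoref{point}. So from now on $W$ is a curve, which is the substantial case.

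Normality of $W$ may be checked at each closed point $x\in W$, and since $W$ is a minimal lc center no point of $W$ is a zero-dimensional lc center of $(X,\Delta)$; as $(X,\Delta)$ has only finitely many lc centers, after replacing $X$ by a suitable affine open neighbourhood of $x$ we may assume that $(X,\Delta)$ is a quasi-projective log canonical threefold without zero-dimensional lc centers and that it suffices to show $W$ is normal. Now invoke Proposition \autoref{dltmodif}: fix a crepant dlt modification $f\colon(X^{dlt},\Delta_{X^{dlt}}+E)\to(X,\Delta)$ with $E=\Exc(f)$ reduced, $\lfloor\Delta_{X^{dlt}}\cap E\rfloor=0$, carrying an anti-effective $f$-ample exceptional divisor, and satisfying strong $GR$-vanishing together with $R^if_*\mathcal O_{X^{dlt}}(-E)=0$ for $i=1,2$. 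Since $f$ is crepant and $W$ is an lc center, there is an lc center $S$ of the dlt pair $(X^{dlt},\Delta_{X^{dlt}}+E)$ with $f(S)=W$; I take $S$ of minimal dimension. Being a stratum of a dlt pair, $S$ is normal (by adjunction together with \cite[Theorem 3.1]{BK}, as in Lemma \autoref{point}).

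The claim that makes everything work is that the natural inclusion $\mathcal O_W\hookrightarrow (f|_S)_*\mathcal O_S$ is an isomorphism. Granting it, the Stein factorization $S\xrightarrow{g}\overline W\xrightarrow{h}W$ has $\overline W=\Spec_W(f|_S)_*\mathcal O_S$ normal (since $g_*\mathcal O_S=\mathcal O_{\overline W}$ and $S$ is normal, $\mathcal O_{\overline W}$ is integrally closed), and $h$ is then a finite morphism with $h_*\mathcal O_{\overline W}=\mathcal O_W$, hence an isomorphism, so $W\cong\overline W$ is normal. To prove $\mathcal O_W=f_*\mathcal O_S$ I would split it into two assertions. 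First, using the connectedness theorem for log canonical centers over $\eta_W$ — applicable because $K_{X^{dlt}}+\Delta_{X^{dlt}}+E$ is $f$-trivial, hence $f$-nef — together with the minimality of $W$, one shows that the generic fibre of $S\to W$ is geometrically connected and reduced, i.e. $h$ is birational and the desired identity holds at the generic point of $W$. Second, one propagates this to all of $W$ using the exact sequence $0\to\mathcal O_{X^{dlt}}(-S)\to\mathcal O_{X^{dlt}}\to\mathcal O_S\to0$, where surjectivity of $f_*\mathcal O_{X^{dlt}}\to f_*\mathcal O_S$ is forced by the injectivity of $R^1f_*\mathcal O_{X^{dlt}}(-S)\to R^1f_*\mathcal O_{X^{dlt}}$; this is a relative Kollár-type injectivity, and it is here that strong $GR$-vanishing for $f$ (after perturbing by the anti-effective $f$-ample exceptional divisor, exactly as in the proof of Proposition \autoref{dltmodif}) together with the $S_3$-sequences of \cite{ABL} and \cite{BK} enter, much as in the proof of Proposition \autoref{birKVV}.

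The main obstacle is precisely this curve case, and inside it the geometry of Example \autoref{Ex}: there $S$ is itself the (wild) elliptic surface mapping to $W$, so one cannot hope to find a lower-dimensional stratum dominating $W$ birationally, and one must genuinely prove that such a surface fibration over a minimal lc center has geometrically connected generic fibre — this is the heart, and the point where minimality is indispensable. I expect the $R^1$-level wildness responsible for the failure of Theorem \autoref{lkvv} to be irrelevant here, since only the $R^0$-part $f_*\mathcal O_S=\mathcal O_W$ is needed; the delicate bookkeeping will instead be in making the strong $GR$-vanishing of Proposition \autoref{dltmodif} genuinely applicable at each step despite the integral $f$-exceptional components occurring in the intermediate boundaries, which is handled by the perturbation argument already built into the proof of that proposition.
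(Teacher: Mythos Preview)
The paper does not contain a proof of Theorem~\ref{AP}: it is quoted from \cite{AP}, and the only information given here is that the proof in \cite{AP} \emph{uses} the birational vanishing of Proposition~\ref{dltmodif}. So there is no argument in the present paper to compare against; one can only check your outline against that hint and against internal consistency.

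Your overall architecture is the expected one and matches the hint: the cases $\dim W=0,2$ are routine (the latter via \cite[Theorem~3.1]{BK} exactly as you say), and in the curve case you pass to the dlt modification of Proposition~\ref{dltmodif}, pick a normal stratum $S$ dominating $W$, and aim for $(f|_S)_*\mathcal O_S=\mathcal O_W$. Two points, however, are not yet proofs.

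First, for your step~(b) you invoke an injectivity $R^1f_*\mathcal O_{X^{dlt}}(-S)\hookrightarrow R^1f_*\mathcal O_{X^{dlt}}$ and call it ``relative Koll\'ar-type injectivity''. No such statement is established in this paper, and in positive characteristic injectivity theorems of this shape are exactly what one does \emph{not} have in general. What Proposition~\ref{dltmodif} actually gives is the \emph{vanishing} $R^1f_*\mathcal O_{X^{dlt}}(-E)=0$ for the full reduced exceptional divisor $E$, hence a surjection $\mathcal O_X\twoheadrightarrow f_*\mathcal O_E$. Rerouting your argument through $E$ (or through a divisorial sub-sum of $E$ for which the boundary perturbation of Remark~\ref{assumfnef} still applies) rather than through a possibly higher-codimension stratum $S$ is both closer to what the paper supplies and avoids the ideal-sheaf issue: if $S$ has codimension $\ge 2$ then $\mathcal O_{X^{dlt}}(-S)$ is not a divisorial sheaf and none of the $S_3$/adjunction machinery of Proposition~\ref{birKVV} applies to it.

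Second, in your step~(a) you assert that the generic fibre of $S\to W$ is geometrically connected \emph{and reduced}. Connectedness is fine via the Koll\'ar--Shokurov connectedness principle (available in this setting), but \emph{reducedness} of the generic fibre is a genuinely nontrivial input in positive characteristic and is not supplied by anything in this paper; you correctly flag Example~\ref{Ex} as the test case, but you do not say how you rule out an inseparable Stein factor. This is almost certainly where the real content of \cite{AP} lies, and it cannot be waved through.
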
 

\begin{thm}\cite[Theorem 2]{AP}\label{AP2} Let $p_0$ be as in Proposition \autoref{dltmodifII}. Let $(X,\Delta)$ be a log canonical threefold whose closed points have perfect residue field of characteristic $p>p_0$. Then the union of all of the log canonical centers of $(X,\Delta)$ is semi-normal. 
\end{thm}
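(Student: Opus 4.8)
The plan is to reduce to the dlt modification of \autoref{dlt with ample} and then to transport the seminormality of its reduced boundary down to $X$ by means of the Grauert--Riemenschneider vanishing of \autoref{dltmodifII}, in place of the characteristic-zero vanishing theorems used in the classical argument. Since seminormality is local on $X$ we may assume $X$ is quasi-projective over $R$. First, I would fix a crepant dlt modification $f\colon (X^{dlt},\Delta_{X^{dlt}}+E)\to (X,\Delta)$ as in \autoref{dlt with ample}, so that $(X^{dlt},\Delta_{X^{dlt}}+E)$ is dlt, $E=\Exc(f)$, every component of $E$ is $\mathbb{Q}$-Cartier, and $f^*(K_X+\Delta)=K_{X^{dlt}}+\Delta_{X^{dlt}}+E$; write $\Gamma:=\lfloor\Delta_{X^{dlt}}+E\rfloor=E+\lfloor\Delta_{X^{dlt}}\rfloor$ for the reduced part of the crepant boundary, which we may take to be $\mathbb{Q}$-Cartier. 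Because $f$ is crepant and $(X^{dlt},\Delta_{X^{dlt}}+E)$ is dlt, the log canonical centers of $(X,\Delta)$ are exactly the images under $f$ of the strata of $\Gamma$ (the irreducible components of intersections of components of $\Gamma$); hence the union $Z$ of all log canonical centers of $(X,\Delta)$, endowed with its reduced structure, satisfies $Z=f(\Gamma)$ as a set.

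The first step is the scheme-theoretic identity $\mathcal{O}_Z\cong f_*\mathcal{O}_\Gamma$. The divisor $\Delta':=\Delta_{X^{dlt}}+E-\Gamma$ is effective with all coefficients $<1$, so $0\le\Delta'\le\Delta_{X^{dlt}}+E$, $\lfloor\Exc(\Delta')\rfloor=0$, and
\[
-\Gamma-(K_{X^{dlt}}+\Delta')=-(K_{X^{dlt}}+\Delta_{X^{dlt}}+E)=-f^*(K_X+\Delta)\sim_{f,\mathbb{Q}}0
\]
is $f$-nef. As $\Delta$ has standard coefficients and $p>p_0$, Proposition \autoref{dltmodifII} gives $R^if_*\mathcal{O}_{X^{dlt}}(-\Gamma)=0$ for all $i>0$. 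Pushing forward $0\to\mathcal{O}_{X^{dlt}}(-\Gamma)\to\mathcal{O}_{X^{dlt}}\to\mathcal{O}_\Gamma\to0$ and using $f_*\mathcal{O}_{X^{dlt}}=\mathcal{O}_X$ then yields $0\to f_*\mathcal{O}_{X^{dlt}}(-\Gamma)\to\mathcal{O}_X\to f_*\mathcal{O}_\Gamma\to0$. Let $Z'\subseteq X$ be the closed subscheme defined by the ideal sheaf $f_*\mathcal{O}_{X^{dlt}}(-\Gamma)$, so that $\mathcal{O}_{Z'}\cong f_*\mathcal{O}_\Gamma$. Then $Z'$ is reduced, since its structure sheaf is the pushforward of the reduced sheaf $\mathcal{O}_\Gamma$, and $\Supp Z'=f(\Gamma)=Z$; hence $Z'=Z$ and $\mathcal{O}_Z\cong f_*\mathcal{O}_\Gamma$.

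The second step is that $\Gamma$ is seminormal. Its components are normal by \cite[Theorem 3.1]{BK} (using $p>5$). At a codimension-one point of $\Gamma$ either only one component passes through, and $\Gamma$ is regular there because that component is normal, or exactly two components meet — more than two would contradict log canonicity along a curve in the threefold $X^{dlt}$ — and then, the generic point of their intersection being a log canonical center of the dlt pair and hence lying in its strict-normal-crossing locus, $\Gamma$ is an ordinary node there. Thus $\Gamma$ is nodal in codimension one. Moreover $\mathcal{O}_\Gamma$ is $S_2$: in the short exact sequence above, $\mathcal{O}_{X^{dlt}}$ and the rank-one reflexive sheaf $\mathcal{O}_{X^{dlt}}(-\Gamma)$ are both $S_3$ on the dlt threefold $X^{dlt}$ by \cite[Corollary 1.3]{ABL} and \cite[Theorem 3.1]{BK}, whence $\depth_x\mathcal{O}_\Gamma\ge2$ at every closed point $x$, while $\mathcal{O}_\Gamma$ is $S_1$ at points of positive dimension since $\Gamma$ is reduced. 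A reduced $S_2$ scheme that is nodal in codimension one is demi-normal, hence seminormal (see \cite{Kolsing}).

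Finally, seminormality of $\Gamma$ descends to $Z$, and it is here that the identification $\mathcal{O}_Z\cong f_*\mathcal{O}_\Gamma$ — rather than merely the set-theoretic equality $Z=f(\Gamma)$ — is essential. Since each component of $Z$ is dominated by a component of $\Gamma$, an element $\phi$ of the total ring of fractions of $\mathcal{O}_Z$ with $\phi^2,\phi^3\in\mathcal{O}_Z=f_*\mathcal{O}_\Gamma$ corresponds to a rational function on $\Gamma$ whose square and cube are regular; by seminormality of $\Gamma$ this function is regular, so $\phi\in f_*\mathcal{O}_\Gamma=\mathcal{O}_Z$, and therefore $Z$ is seminormal. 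I expect the main obstacle to be the second step — specifically the $S_2$ property of $\Gamma$, which rests on the Cohen--Macaulayness of dlt threefold singularities in characteristic $p>5$ and is where the characteristic assumption genuinely enters — together with the minor bookkeeping of arranging $\Gamma$ to be $\mathbb{Q}$-Cartier so that Proposition \autoref{dltmodifII} applies directly.
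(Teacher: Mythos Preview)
The paper does not actually prove Theorem~\ref{AP2}; it is quoted from \cite{AP} and only mentioned here as an application whose proof ``uses the birational vanishing obtained in Proposition~\ref{dltmodifII}.'' So there is no proof in the paper to compare against directly. That said, your proposal follows exactly the classical strategy one expects (and which \cite{AP} presumably implements): pass to the dlt model of Proposition~\ref{dlt with ample}, use the vanishing of Proposition~\ref{dltmodifII} to identify $\mathcal{O}_Z$ with $f_*\mathcal{O}_\Gamma$, verify that $\Gamma$ is demi-normal (hence seminormal) using the $S_3$ property of reflexive sheaves on dlt threefolds, and descend seminormality along the proper surjection $\Gamma\to Z$ with connected fibers. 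This is the right shape of argument.

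Two points deserve comment. First, you invoke Proposition~\ref{dltmodifII} by asserting ``as $\Delta$ has standard coefficients,'' but the theorem as stated in this paper does not include that hypothesis. Proposition~\ref{dltmodifII} genuinely requires standard coefficients (it feeds into Lemma~\ref{point} and Kawakami's vanishing \cite{KKVV}), so either the statement in \cite{AP} carries this hypothesis and the paper has dropped it in transcription, or some additional argument is needed; in any case you should flag the discrepancy rather than silently assume it. Second, your ``minor bookkeeping'' that $\Gamma$ is $\mathbb{Q}$-Cartier is not entirely free: Proposition~\ref{dlt with ample} only guarantees that each exceptional component is $\mathbb{Q}$-Cartier, not that $X^{dlt}$ is $\mathbb{Q}$-factorial, so the non-exceptional part $\lfloor\Delta_{X^{dlt}}\rfloor$ is not obviously $\mathbb{Q}$-Cartier. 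One can work around this (e.g.\ by localizing and using that the dlt locus where $\lfloor\Delta_{X^{dlt}}\rfloor$ lives is well-behaved, or by arranging a $\mathbb{Q}$-factorial model), but it is a genuine detail rather than a formality.
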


Finally, using Proposition \autoref{dltmodif}  instead of  \cite[Theorem 1.5]{ABP} 
 one can shorten the proof of the following result, due to \cite[Theorem 1.4]{ABP}.

\begin{cor}\label{S2} Let $(X,S+\Delta)$ be a log canonical threefold singularity with closed points of perfect residue field of characteristic $p>5$, where $S$ is a reduced effective Cartier divisor. Then $S$ is $S_2$. 
\end{cor}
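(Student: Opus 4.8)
Since the $S_2$ property is local on $X$ and unaffected by completion, I would localize at a closed point $x\in S$ and may assume $\dim_x X=3$; for $\dim_x X\le 2$ the scheme $S$ is automatically $S_2$ at $x$ because $X$ is normal. As $S$ is a Cartier divisor on the normal variety $X$, its local equation is a non-zerodivisor, so $\mathcal{O}_S$ has no embedded primes and $\depth_x\mathcal{O}_S=\depth_x\mathcal{O}_X-1$; hence the assertion is equivalent to $\depth_x\mathcal{O}_X\ge 3$, i.e.\ to $H^2_x(X,\mathcal{O}_X)=0$. If $X$ has klt singularities at $x$ this holds by \cite[Corollary 1.3]{ABL} and \cite[Theorem 3.1]{BK}, so I may assume $X$ is strictly log canonical at $x$; in any case $x$ lies in the log canonical locus of $(X,S+\Delta)$, indeed in the coefficient-one component $S$.

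Next I would compare $X$ with a crepant dlt modification taken through the Cartier divisor $S$. By Proposition~\autoref{dlt with ample}, fix $f\colon (Y,\Delta_Y+E)\to (X,S+\Delta)$ with $E=\Exc(f)$ reduced, each component $\mathbb{Q}$-Cartier and of coefficient one in $\Delta_Y+E$, and admitting an $f$-ample $\mathbb{Z}$-divisor supported on $E$; by \cite[Corollary 1.3]{ABL} and \cite[Theorem 3.1]{BK} the scheme $Y$ is Cohen--Macaulay and all of its rank-one reflexive sheaves are $S_3$. Write $\Delta_Y=S_Y+\Delta_Y'$ with $S_Y$ the birational transform of $S$, a (normal) coefficient-one component of the dlt pair. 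Since $S$ is Cartier, $f^{*}S=S_Y+F$ with $F\ge 0$ supported on $E$, and an exceptional prime $E_i$ appears in $F$ if and only if $f(E_i)\subseteq S$; set $\overline{S}:=(f^{*}S)_{\mathrm{red}}=S_Y+\sum_{f(E_i)\subseteq S}E_i$. Then $\overline{S}$ is a reduced union of coefficient-one boundary components of the three-dimensional dlt pair $(Y,\Delta_Y+E)$, hence semi-normal and $S_2$ by \cite[Theorem 3.1]{BK}, and $f$ restricts to a birational contraction $\overline{S}\to S$.

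The technical heart is the identity $f_{*}\mathcal{O}_{\overline{S}}=\mathcal{O}_S$ together with $R^{i}f_{*}\mathcal{O}_{\overline{S}}=0$ for $i>0$; granting it one gets $H^{i}_{x}(X,\mathcal{O}_S)\cong H^{i}_{f^{-1}(x)}(Y,\mathcal{O}_{\overline{S}})$ for all $i$, and since $\overline{S}$ is $S_2$ of dimension two with no embedded primes, the groups $H^0$ and $H^1$ on the right vanish once $f^{-1}(x)\cap\overline{S}$ is zero-dimensional, which yields $\depth_x\mathcal{O}_S\ge 2$. To prove the vanishing I would argue by induction in the style of Proposition~\autoref{birKVV}: starting from $0\to\mathcal{O}_Y(-f^{*}S)\to\mathcal{O}_Y\to\mathcal{O}_{f^{*}S}\to 0$, the projection formula and $f_{*}\mathcal{O}_Y=\mathcal{O}_X$, one strips off the components of $\overline{S}$ and the multiplicities in $F$ one at a time; each graded piece is a rank-one reflexive sheaf on one of the normal surfaces $S_Y$ or $E_i^{\nu}$ of the form $D-(K+\text{boundary})$ with this $\mathbb{Q}$-divisor relatively ample, so its higher direct images vanish by the surface-level Kawamata--Viehweg results (Lemma~\autoref{curve}, Lemma~\autoref{point}, and their analogue for the birational morphism $S_Y\to S$ of normal surfaces), while the $S_3$-property on $Y$ and the relative Kawamata--Viehweg vanishing of Proposition~\autoref{dltmodif} carry these vanishings up to $X$. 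It is precisely the Cartier hypothesis on $S$ that makes $f^{*}S$ an $f$-trivial Cartier divisor, ensuring that the restriction sequences are exact with the expected cokernels and that $f_{*}\mathcal{O}_{\overline{S}}=\mathcal{O}_S$ rather than a proper semi-normalization of $S$. The main obstacle I anticipate is the case where $x$ is a zero-dimensional lc center of $(X,S+\Delta)$: then $\overline{S}$ contains an exceptional divisor $E_i$ with $f(E_i)=x$, so $f^{-1}(x)\cap\overline{S}$ is one-dimensional and the previous paragraph only gives $\depth_x\mathcal{O}_S\ge 1$; one must in addition prove $H^{1}(E_i^{\nu},\mathcal{O}_{E_i^{\nu}})=0$ for each such $E_i$. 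For this I would use that, by adjunction and crepancy, $(E_i^{\nu},\Diff_{E_i^{\nu}}(\Delta_Y+E-E_i))$ is a log Calabi--Yau surface on which $S_Y$ restricts to a nonzero effective divisor, so that $E_i^{\nu}$ is of log del Pezzo type and \cite[Theorem 1.1]{ABL} (or Lemma~\autoref{point}) gives the vanishing; showing that $S_Y$ really does meet each such $E_i^{\nu}$ in a nonzero divisor — equivalently, ruling out a log Calabi--Yau fibration structure on these surfaces — is where I expect the real work, and the Cartier hypothesis on $S$, to be concentrated.
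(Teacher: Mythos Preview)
Your reduction to $H^2_x(X,\mathcal{O}_X)=0$ is correct and matches the paper, but from there the paper proceeds very differently and avoids precisely the obstacle you flag as the hard case.

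The paper's key move is to \emph{drop} $S$ from the boundary and work with the pair $(X,\Delta)$ rather than $(X,S+\Delta)$. Because $S$ is Cartier and $x\in S$, any exceptional divisor with center $\{x\}$ satisfies $a(E,X,\Delta)=a(E,X,S+\Delta)+\ord_E(S)\ge -1+1=0$, so $x$ is never a zero-dimensional lc center of $(X,\Delta)$. One may then assume $x$ lies on a one-dimensional minimal lc center $C$ of $(X,\Delta)$ (else klt/plt results apply), take the dlt modification of Proposition~\ref{dltmodif} for $(X,\Delta)$ so that every exceptional $E_i$ maps onto $C$, and invoke Corollary~\ref{wild} to identify $H^2_x(\mathcal{O}_X)\cong H^0_x(C,R^1{f_{|E}}_*\mathcal{O}_E)$. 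Assuming this is nonzero, Theorem~\ref{AP} gives $C$ normal, miracle flatness makes $f_{|E}$ flat, and Raynaud's theorem forces every component of the fiber over $x$ to be multiple. But $(Y,\Delta_Y+E+f^*S)$ is lc (pullback of the lc pair $(X,S+\Delta)$, using that $S$ is Cartier), and adjunction to $E_i$ then exhibits the Cartier divisor $f^*S|_{E_i}$, which contains a multiple fiber component, as part of an lc boundary---a contradiction.

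Your approach, taking the dlt modification of $(X,S+\Delta)$ and pushing $\mathcal{O}_{\overline S}$ forward, is not obviously wrong in spirit, but as written it has real gaps. First, you invoke Proposition~\ref{dltmodif} and Lemma~\ref{point} for the vanishing steps, but the former requires no zero-dimensional lc centers and the latter requires standard coefficients and $p>p_0$; neither hypothesis is available here. Second, even away from zero-dimensional lc centers your claim that $f^{-1}(x)\cap\overline S$ is zero-dimensional is generally false: one-dimensional lc centers of $(X,S+\Delta)$ contained in $S$ produce exceptional surfaces in $\overline S$ with one-dimensional fibers over $x$, and $S_Y\to S$ itself need not be finite over $x$. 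Third, the decisive claim that $S_Y$ meets every exceptional $E_i$ with $f(E_i)=x$ in a nonzero divisor, which you correctly single out as the crux, is left open; without it you cannot conclude $E_i^{\nu}$ is of log del Pezzo type and hence cannot get $H^1(E_i^{\nu},\mathcal{O})=0$. The paper sidesteps all of this by never allowing $f(E_i)=x$: the change of boundary forces every exceptional divisor to dominate a curve, and the Cartier hypothesis on $S$ is used only at the very end, via Raynaud's multiple-fiber theorem, to derive the contradiction.
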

\begin{proof} 

The following proof follows the lines of \cite[Theorem 1.4]{ABP}, except that we use Proposition \autoref{dltmodif} instead of \cite[Theorem 1.5]{ABP}.
Let $x\in S$ be a closed point, we want to show that $H^1_x(S,\mathcal{O}_S)=0$. By the long exact sequence of local cohomology associated to the standard short exact sequence of $S\subset X$, it is sufficient to prove that $H^1_x(X,\mO_X)=0$ and $H_x^2(X,\mO_X(-S))=0$. Since $S$ is Cartier, $X$ is $S_2$, and local cohomology can be computed locally around $x$ \cite[Prop. 1.3]{Grolc}, we only need to show $H^2_x(X,\mO_X)=0$. Since $x\in S>0$ we can not have that $x$ is a log canonical center of $(X,\Delta)$. Without loss of generality, we may assume that $x\in C$ where $C$ is a one dimensional minimal log canonical center (since otherwise  $H^2_x(X,\mO_X)=0$ by \cite{ABL, BK}). By Corollary \autoref{wild} there exists a dlt--modification  $f\colon (Y,\Delta_Y+E)\to (X,\Delta)$
with reduced exceptional divisor $E$ and such that  $H^2_x(X,\mathcal{O}_X)\cong H^0_x(C,R^1{f_{|E}}_*\mathcal{O}_E)$. 
By Theorem \autoref{AP}, the curve $C$ is normal and therefore ${f_{|E}}_*\mathcal{O}_E=\mathcal{O}_C$, and by miracle flatness \cite[Tag 00R4]{stacks-project}, $f_{|E}\colon E\to C$ is flat. By \cite[Theorem 3.1]{BK}, the divisor $E$ is $S_2$ and each component $E_i$ is normal. Assume in order to arrive at a contradiction that $H^0_x(C,R^1{f_{|E}}_*\mathcal{O}_E)\neq 0$. Then, \cite[Theorem 8.2.1]{Ray}  implies that each component of the fiber over $x\in C$ is multiple (see \cite[Section 4]{ABP} for more details on wild fibers). By construction $(Y, \Delta_Y+E+f^*(S))$ is log canonical, and hence by adjunction
$(E_i, \Diff_{E_i}(\Delta_Y+E-E_i+f^*(S))$ is log canonical for every $i$. Since $f^*(S)$ is Cartier, $f^*(S)_{|E_i}$ is a component of  $\Diff_{E_i}(\Delta_Y+E-E_i+f^*(S))$ for every $i$, \cite[p. 154]{Kolsing}. But $f^*(S)_{|E_i}$ contains an irreducible component of the fiber of $f_{|E}$ over $x$, which we have seen to be multiple. This contradicts that $(E_i, \Diff_{E_i}(\Delta_Y+E-E_i+f^*(S))$ is log canonical. 
\end{proof}

\bibliography{phdbib-1}{}
\bibliographystyle{alpha}

\end{document}